\documentclass[11pt, reqno]{amsart}

\usepackage[T1]{fontenc}
\usepackage{lmodern}
\usepackage{amsmath, amssymb, amsthm, mathtools}
\usepackage[margin=1.2in]{geometry}
\usepackage{microtype, enumitem, graphicx, subcaption}

\usepackage[unicode=true, hidelinks]{hyperref}
\hypersetup{pdftitle={The exact number of solutions to the planar isotropic Lp dual Minkowski problem}, pdfauthor={Guanyu Tao}, bookmarksnumbered=true}

\numberwithin{equation}{section}
\newtheorem{theorem}{Theorem}[section]
\newtheorem{proposition}[theorem]{Proposition}
\newtheorem{lemma}[theorem]{Lemma}
\newtheorem{corollary}[theorem]{Corollary}
\theoremstyle{remark}

\newcommand{\R}{\mathbb R}
\newcommand{\N}{\mathbb N}
\newcommand{\sphere}{\mathbb S^1}
\newcommand{\dd}{\,\mathrm d}
\newcommand{\ceil}[1]{\left\lceil #1 \right\rceil}

\allowdisplaybreaks[2]
\title[Planar isotropic $L_p$ dual Minkowski problem]{The exact number of nonconstant positive solutions to the planar isotropic $L_p$ dual Minkowski problem}
\author{Guanyu Tao}
\address{
Institute of Mathematics,
Academy of Mathematics and Systems Science,
Chinese Academy of Sciences,
No.\ 55 Zhongguancun East Road,
Beijing 100190,
China
}
\email{taoguanyu@amss.ac.cn}

\begin{document}

\begin{abstract}
We determine the exact number, up to rotations, of nonconstant positive $C^2$ solutions to the planar isotropic $L_p$ dual Minkowski problem for every $(p, q) \in \R^2$. We obtain a new parametrization of the associated period integral, characterize the regions where the period is strictly increasing or strictly decreasing, and prove that in the remaining nonmonotone region it has a unique nondegenerate maximum. As a consequence, we have a complete classification.
\end{abstract}

\maketitle

\section{Introduction}

The Brunn--Minkowski theory is one of the central theories in convex geometry and studies geometric measures and geometric inequalities of convex bodies; see, for example, Schneider \cite{Sch14}. The Minkowski problem is one of the basic problems in this theory and is also closely related to geometric analysis and fully nonlinear partial differential equations. Fundamental regularity results for the classical Minkowski problem were obtained by Cheng and Yau \cite{CY76}. Lutwak \cite{Lut93,Lut96} initiated the $L_p$ Brunn--Minkowski theory and introduced the $L_p$ Minkowski problem, which contains the classical Minkowski problem, the logarithmic Minkowski problem and the centro-affine Minkowski problem as important special cases. A landmark result for the logarithmic Minkowski problem was obtained by B\"or\"oczky, Lutwak, Yang and Zhang \cite{BLY13}, who solved the existence problem for even data. Many other important results on the $L_p$ Minkowski problem were obtained in \cite{BT17,CW06,HLY05,JLZ16,LW13,LYZ04}. 

On the dual side, Huang, Lutwak, Yang and Zhang \cite{HLY16} introduced the dual curvature measures, answering a longstanding question in the dual Brunn--Minkowski theory, and formulated the associated dual Minkowski problem. B\"or\"oczky, Lutwak, Yang, Zhang and Zhao \cite{BLY19} later gave a complete solution to the even dual Minkowski problem for $1<q<n$. Further developments of the dual Minkowski problem can be found in \cite{BHP18,CL18,HP18,Zhao17,Zhao18}. Subsequently, Lutwak, Yang and Zhang \cite{LYZ18} introduced the $L_p$ dual curvature measures and posed the $L_p$ dual Minkowski problem, which provides a unified framework containing both the $L_p$ Minkowski problem and the dual Minkowski problem as special cases. Huang and Zhao \cite{HZ18} established existence results in the weak sense and existence and uniqueness in the smooth category. Further results on the $L_p$ dual Minkowski problem, including smooth solutions and nonuniqueness, were obtained in \cite{BF19,CHZ19,CCL21,CL21,LLL22,SX21}.

In this paper, we study the planar isotropic $L_p$ dual Minkowski problem
\begin{equation} \label{eq:main}
    u^{1 - p} ((u')^2 + u^2)^\frac{q - 2}{2} (u'' + u) = 1, \qquad u > 0,
\end{equation}
on $\sphere$, where $(p, q) \in \R^2$. Since \eqref{eq:main} implies $u'' + u > 0$, every positive solution is the support function of a strictly convex body in $\R^2$ containing the origin in its interior. When $q = 2$, \eqref{eq:main} is the planar isotropic $L_p$ Minkowski problem. When $p = 0$, it is the planar isotropic dual Minkowski problem.

The study of the planar isotropic problem has a long history. A pioneering work was given by Andrews \cite{Ben03}, who reduced the classification of solutions of the planar isotropic $L_p$ Minkowski problem to the analysis of a period integral and obtained a complete classification. His work laid the foundation for the period integral method in the study of planar Minkowski-type problems. For the planar isotropic dual Minkowski problem, Liu and Lu \cite{LL26} made an important advance, where a main difficulty comes from the nonmonotonicity of the period integral. By combining theoretical analysis with numerical estimation of an integral with parameters, they developed an effective method to deal with this difficulty, determined the number of solutions for $0<q\leq 4$, and obtained an improved nonuniqueness result for $q>4$. More recently, Li and Wan \cite{LW26} carried out a systematic study of the general planar isotropic $L_p$ dual Minkowski problem. They established the endpoint asymptotic behavior and duality of the period integral, proved its monotonicity with respect to the parameters $p$ and $q$, and obtained monotonicity with respect to $r$ in large regions of the $(p,q)$-plane. They also obtained exact classification results in many regions and useful results in several nonmonotone regions. Their work greatly extended the period integral method to the general two-parameter problem and provides an important foundation for the present paper.

Building on the framework developed in the previous works, we obtain a complete classification for the planar isotropic $L_p$ dual Minkowski problem. More precisely, for every $(p,q)\in\mathbb{R}^2$, we determine the exact number, up to rotations, of nonconstant positive solutions, including the parameter regions where only lower bounds or partial classification results were previously known. We now state our main theorem.

\begin{theorem} \label{thm:main}
    Let $N(p, q)$ denote the number of $C^2$ nonconstant solutions to \eqref{eq:main} up to a rotation. Then except at $(1, 2), (-2, -1)$ and $(-2, 2)$, in which $N(p, q) = \infty$, one has
    \begin{enumerate}
        \item if $q - p \le 0$, then $N(p, q) = 0$;
        \item if $0 < q - p \le 1$, then
              \[
              N(p, q) =
              \begin{dcases}
                1, & q - p < 1, q > 2p, 2q > p, \\
                0, & \text{otherwise};
              \end{dcases}
              \]
        \item if $1 < q - p \le 4$, then
              \[
              N(p, q) =
              \begin{dcases}
                1, & q < 2p \text{ or } 2q < p, \\
                0, & \text{otherwise};
              \end{dcases}
              \]
        \item if $q - p > 4$, then
              \[
              N(p, q) =
              \begin{dcases}
                \ceil{\sqrt{q - p}} - 1, & q < 2p \text{ or } 2q < p, \\
                \ceil{\sqrt{q - p}} - 3, & p < 0 < q, \frac{1}{-p} + \frac{1}{q} \le 1, \\
                \ceil{\sqrt{q - p}} - 2, & \text{otherwise},
              \end{dcases}
              \]
    \end{enumerate}
    where $\ceil{\cdot}$ is the ceiling function.
\end{theorem}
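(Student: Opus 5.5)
We will reduce \eqref{eq:main} to a Hamiltonian ODE and count solutions through a period function, as in Andrews \cite{Ben03}. Write the support function in polar form: set $\rho = ((u')^2+u^2)^{1/2}$ and let $\phi$ be the angle between the normal and the radius vector. Then \eqref{eq:main} has a first integral. Concretely, multiply by $u^{p-1}u'$-type factors, or use the radial function $r$ of the body, which satisfies an autonomous second-order equation in the polar angle. This gives a conserved energy $E(u,u')$. Its level sets around the constant solution $u\equiv 1$ are closed orbits, parametrized by an amplitude parameter $r\in(0,1)$, for example the ratio of the minimum to the maximum of $u$ or of the radial function.

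Let $\Theta(r)$ be the angle swept over half an oscillation. A nonconstant $2\pi$-periodic solution with $k$ oscillations per turn corresponds exactly to $\Theta(r)=\pi/k$, $k\ge 2$ an integer; $k=1$ is excluded because it would be a translate, which is impossible unless $p=1$. Up to rotation, each root $r$ gives exactly one solution. Hence
\[
N(p,q)=\sum_{k\ge 2}\#\{r\in(0,1):\Theta(r)=\pi/k\}.
\]

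Next we compute the endpoint values of $\Theta$. Linearizing at $u\equiv1$ gives $u''+(q-p)u=0$ to first order, so $\Theta(1^-)=\pi/\sqrt{q-p}$; this is also why nothing occurs when $q-p\le 0$. The limit $\Theta(0^+)$ comes from the degenerate orbit, where the body collapses to a segment or the origin approaches the boundary. Following Li--Wan, this limit is $\pi/2$, $\pi$, or another explicit value, according to the signs of $q-2p$, $2q-p$ and the quantity $\frac{1}{-p}+\frac1q-1$. These conditions are exactly the ones appearing in the case distinctions of the theorem.

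If $\Theta$ is monotone, the count is the number of integers $k\ge2$ with $\pi/k$ strictly between the endpoint values. This gives $\lceil\sqrt{q-p}\rceil-1$ when $\Theta(0^+)=\pi/2$ is approached from the appropriate side. It gives $\lceil\sqrt{q-p}\rceil-2$ or $-3$ when the lower endpoint value $\pi/2$ or $\pi/3$ is excluded or attained only in the limit. The small-$(q-p)$ cases (2) and (3) follow in the same way.

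The key analytic work is the behaviour of $\Theta$ itself. We will find a new substitution that writes $\Theta(r)=\int_0^1 K(p,q,r,t)\,dt$ with a kernel whose $r$-derivative has a sign controlled by an explicit polynomial-type expression. From this we will show that $\Theta$ is strictly monotone on the regions where the theorem gives the ``monotone'' count.

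The main obstacle is the nonmonotone region, roughly $p<0<q$ near the curve $\frac1{-p}+\frac1q=1$. There we must show that $\Theta$ has exactly one critical point, a nondegenerate maximum. Our first attempt will be to show that $\Theta'$ has a single sign change by writing $\Theta'(r)=\int K_r\,dt$ and applying a Sturm/Chebyshev-type argument, namely that $K_r(\cdot,t)$ changes sign in a way that is ordered in $t$. A fallback is to prove that $\Theta'(r)=0$ forces $\Theta''(r)<0$. With a unique maximum, each level $\pi/k$ strictly between $\Theta(0^+)$ and $\max\Theta$ would be hit twice. The endpoint bookkeeping must therefore combine with this so that the totals reduce to the stated formulas, and this check will be done case by case.

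Finally, the exceptional points $(1,2)$, $(-2,-1)$ and $(-2,2)$ are the points where $\Theta$ is constant. These are respectively the translation invariance, its dual, and the affine case $p=-2$, $q=2$ of Andrews. There a whole family of solutions exists, so $N=\infty$.
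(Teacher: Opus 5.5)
\textbf{Error in the reduction: $k=1$ must be counted.} A solution with $\Theta=\pi$ has one minimum and one maximum per turn. It is a translate of another solution only where \eqref{eq:main} is translation invariant, i.e.\ at $(1,2)$. Elsewhere it is a genuine nonconstant solution, and the correct count is $N(p,q)=\#\{r>1:\Theta(p,q,r)=\pi/k,\ k\in\N\}$ with $k\ge1$.

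Your own exceptional points refute the claim. At $(-2,-1)$ one has $p\ne1$, yet $\Theta\equiv\pi$, and the whole family (ellipses with a focus at the origin) consists of $k=1$ solutions. Your formula would give $N=0$ there.

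Several lines of the theorem come from $k=1$:
\begin{itemize}
\item In case (2) with $q>2p$, $2q>p$, $\Theta$ decreases from $\pi/\sqrt{q-p}>\pi$ to a limit in $[\pi/2,\pi)$. The unique solution is the crossing of the level $\pi$.
\item In case (3) with $q<2p$, one has $p>1$, and $\Theta$ increases from $[\pi/2,\pi)$ to $\frac{q}{q-p}\frac{\pi}{2}>\pi$. Again the only level crossed is $\pi$.
\item The count $\ceil{\sqrt{q-p}}-1$ in case (4) is $k=1,\dots,\ceil{\sqrt{q-p}}-1$, which requires $\Theta(\infty)>\pi$.
\end{itemize}
Summing over $k\ge2$ gives $0$, $0$ and at most $\ceil{\sqrt{q-p}}-2$ in these cases, so your bookkeeping paragraph cannot be made consistent. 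The $-3$ has nothing to do with a level $\pi/3$. It arises because $\Theta$ increases to the unattained limit $\pi/2$, missing both $k=1$ and $k=2$.

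Two smaller points. The quantity $\frac{1}{-p}+\frac{1}{q}$ does not enter $\Theta(\infty)$: by Lemma~\ref{lem:limit} that limit depends only on the signs of $p$ and $q$, and this quantity is instead the threshold for monotonicity. Also, for $q\le p$, linearization does not exclude large nonconstant solutions; the paper uses the maximum principle for $p>q$ and ODE uniqueness for $p=q$.

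\textbf{The three analytic facts every count rests on are only announced.}

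(i) The case distinctions are exact monotonicity regions, and both directions are needed. For $p<0<q$, $\Theta$ is strictly increasing iff $\frac{1}{-p}+\frac{1}{q}\le1$; this is what yields $\ceil{\sqrt{q-p}}-3$. It is strictly decreasing iff $p^2-pq+q^2+3p-3q\le0$. The paper obtains both from the energy parameter $E$ and the convolution $\Theta=\frac{1}{q-p}K\ast W'$.
\begin{itemize}
\item For the increasing region, the Laplace transform makes $s\hat\Theta(s)$ an explicit Gamma quotient. This yields $\Theta'=\frac{\pi}{\sqrt{q-p}}\sum_{n\ge1}g^{\ast n}/n!$ with $g=F(t)/t$, so the sign reduces to Lemma~\ref{lem:F}. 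Necessity comes from the zero of the analytic continuation at $s=-\omega$.
\item For the decreasing region, the linear ODE for $Y=(W')^{-2}$ makes $M=W'\sqrt{1-e^{-2x}}$ decreasing.
\end{itemize}
``A kernel whose $r$-derivative has a polynomial-type sign'' is not a substitute for this.

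(ii) For unimodality you name no kernel with ordered sign changes. The paper writes $\Theta'(E)=\int_0^h(\text{positive})\,D(\tfrac12\log(1+z))\sqrt{h-z}\,\dd z$. It shows from the log-convexity of $Y'$ that $D$ changes sign at most once, and then applies Lemma~\ref{lem:sign}. The nonmonotone region is also not confined near $\frac{1}{-p}+\frac{1}{q}=1$; it contains, for example, all of $p=0$, $q>3$.

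(iii) Even granting unimodality, you expect levels between the degenerate endpoint value and $\max\Theta$ to be hit twice, and you defer to case checks. The stated formulas contain no double hits. That is true only because of the upper bound $\Theta<\pi$ for $q-p>1$, $q\ge2p$, $2q\ge p$ (Lemma~\ref{lem:bound}). The paper proves it from Lemma~\ref{lem:pqmonotone} by comparison with $\Theta(-2,2,\cdot)\equiv\pi/2$, with the segment $q-p=1$ in the decreasing region, and with the line $q=2p$ in the increasing region. Together with $\Theta(\infty)\ge\pi/2$, this leaves no $\pi/k$ in $(\Theta(\infty),\max\Theta)$. The absence of interior minima (Theorem~\ref{thm:nonmonotone}) gives $N=0$ in case (2) when $q\le2p$ or $2q\le p$.

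This bound is the missing step that closes the count.
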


Figure~\ref{fig:main} follows the same format as in Li--Wan\cite{LW26} for ease of comparison.

\begin{figure}[htbp]
    \centering
    \includegraphics[width=.8\textwidth]{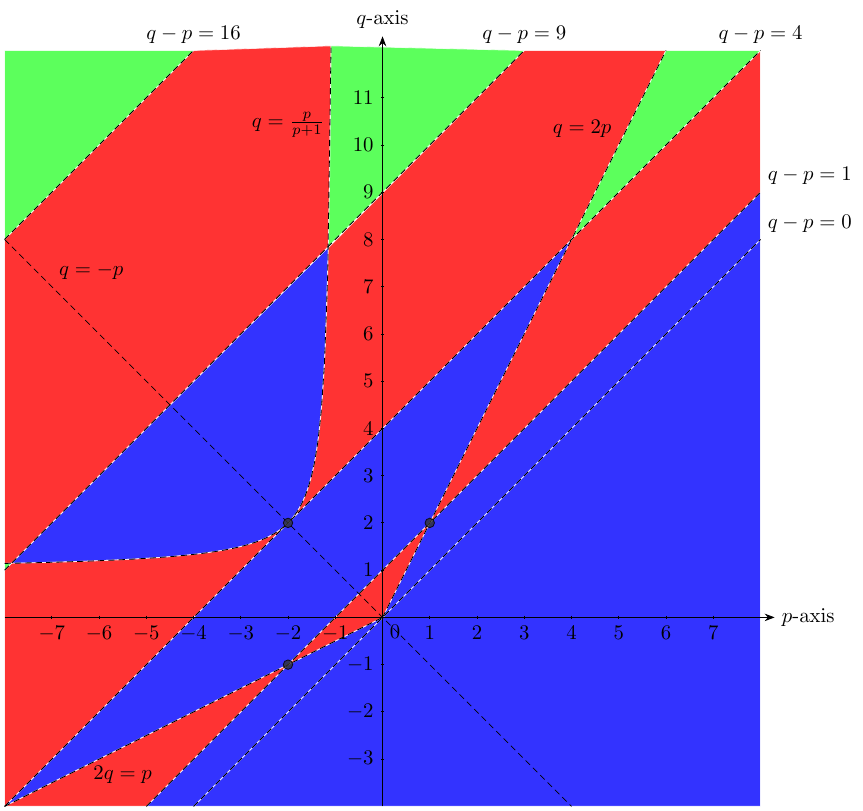}
    \caption{Blue domain: $N(p, q) = 0$. Red domain: $N(p, q) = 1$. Green domain: $N(p, q) = 2$.}
    \label{fig:main}
\end{figure}

The starting point of our argument is a new representation of the period integral. In the region $p \le 0 < q$, we introduce a parameter $E$ and a strictly convex function $U$. If $W$ denotes the distance between the two inverse branches of $U$, then the period integral admits a convolution representation
\[
\Theta(E) = \frac{1}{q - p} (K \ast W')(E).
\]
Moreover, the new parameter $E$ is strictly increasing with respect to the original parameter $r$. This representation allows us to study the global shape of $\Theta$ by analytic methods that are not apparent from the original period integral.

We then analyze the three possible behaviors of $\Theta(E)$. The strictly increasing region is characterized by studying the Laplace transform of the convolution formula. For the strictly decreasing region, we derive the differential identities of $W'$. In the remaining region, a further differential identity shows that $\Theta$ has at most one critical point, which, if it occurs, is a nondegenerate maximum. The corresponding results in the other regions are obtained by duality. Once the global profile of $\Theta$ is determined, the endpoint limits and a final upper bound yield the exact numbers of nonconstant solutions by counting the intersections of $\Theta$ with the levels $\pi/k$.

In Section~\ref{sec:2}, we recall the original period integral $\Theta(p, q, r)$ and basic properties established in Li-Wan in Subsection~\ref{sec:2.1}. We then introduce the new parameter $E$ and the convolution representation in Subsection~\ref{sec:2.2}, together with the differential identities for $W'$ that will be used later in Subsection~\ref{sec:2.3}. In Section~\ref{sec:3}, we determine the global behavior of $\Theta(E)$: Subsections~\ref{sec:3.1} and \ref{sec:3.2} characterize the strictly increasing and strictly decreasing regions, respectively, while Subsection~\ref{sec:3.3} treats the remaining region and proves the one-critical-point property. In Section~\ref{sec:4}, we prove Theorem~\ref{thm:main}.

\section{Preliminaries} \label{sec:2}

\subsection{The period integral} \label{sec:2.1}
Equation \eqref{eq:main} is equivalent to
\begin{equation} \label{eq:ode}
    u'' = u^{p - 1} ((u')^2 + u^2)^\frac{2 - q}{2} - u.
\end{equation}
The right-hand side, viewed as a function of $(u, u')$, is smooth on $(0, \infty) \times \R$. Hence the initial-value problem has a unique local solution, and every positive $C^2$ solution is smooth.

We first consider $p \ge q$. If $p > q$, at a global maximum and a global minimum,
\[
\left( \max_{\sphere} u \right)^{p - q} \le 1 \le \left( \min_{\sphere} u \right)^{p - q}.
\]
It follows that $u \equiv 1$. If $p = q$, the initial data $(u, u') = (c, 0)$ at a critical point are also satisfied by the constant solution $u \equiv c$. Uniqueness of the initial-value problem implies that $u$ is constant.

We therefore assume $q > p$ in the rest of this paper. Given a nonconstant solution $u$, at a critical point of $u$, equation \eqref{eq:ode} becomes
\begin{equation} \label{eq:critical}
    u'' = u(u^{-(q - p)} - 1).
\end{equation}
Note that $u$ cannot pass through $(u, u') = (1, 0)$, by uniqueness. Thus every critical point is nondegenerate. For a local minimum $u_-$ and a local maximum $u_+$, we have $u_- < 1 < u_+$. For $x > 0$, define
\[
\Phi_s(x) := \int_{1}^{x} t^{s - 1} \dd t =
\begin{dcases}
    \frac{x^s - 1}{s}, & s \neq 0 , \\
    \log x, & s = 0.
\end{dcases}
\]
Note that $\Phi_s$ is continuous and strictly increasing in $s$ for $x \neq 1$. By \eqref{eq:main}, we have a first integral
\[
\frac{\dd}{\dd \theta} \left[ \Phi_q(((u')^2 + u^2)^\frac{1}{2}) - \Phi_p(u) \right] = ((u')^2 + u^2)^\frac{q - 2}{2} u'(u'' + u) - u^{p - 1}u' = 0.
\]
Define $\Psi(x) := \Phi_q(x) - \Phi_p(x)$. Noting that $\Psi' < 0$ on $(0, 1)$ and $\Psi' > 0$ on $(1, \infty)$, every local minimum of $u$ has the same value and every local maximum also has the same value. Hence $u_- = \min_{\sphere} u$ and $u_+ = \max_{\sphere} u$. Let $r = u_+/u_- > 1$. Evaluating the first integral at $u_-$ and $u_+$ gives
\begin{equation} \label{eq:u_-}
    u_-^{q - p} = \frac{\Phi_p(r)}{\Phi_q(r)}.
\end{equation}
Let $x = u/u_- \in [1, r]$ and define
\[
I(x) :=
\begin{dcases}
    \left( 1 + (r^q - 1) \frac{\Phi_p(x)}{\Phi_p(r)} \right)^\frac{2}{q} - x^2, & q \neq 0, \\
    r^\frac{2\Phi_p(x)}{\Phi_p(r)} - x^2, & q = 0.
\end{dcases}
\]
Clearly $I(1) = I(r) = 0$. We claim that $I > 0$ on $(1, r)$. Indeed, for $\lambda \in [0, 1]$, define
\[
J_s(\lambda) :=
\begin{dcases}
    (1 + (r^s - 1)\lambda)^\frac{1}{s}, & s \neq 0, \\
    r^\lambda, & s = 0.
\end{dcases}
\]
Note that $J_s$ is continuous and strictly increasing in $s$. Taking $\lambda_0 = \Phi_p(x)/\Phi_p(r) \in [0, 1]$, we have $J_p(\lambda_0) = x$. Since $q > p$, it follows that $J_q(\lambda_0) > x$, which is exactly $I(x) > 0$.

Using the identity
\[
\Phi_s(tx) - \Phi_s(t) = t^s \Phi_s(x), \qquad t > 0,
\]
the first integral, by \eqref{eq:u_-}, can be written as
\begin{equation} \label{eq:u'}
    (u')^2 = u_-^2 I \left( \frac{u}{u_-} \right).
\end{equation}
Thus assume $u(0) = \min_{\sphere} u$ and the angle $\Theta$ between a minimum and the adjacent maximum is
\[
\Theta(p, q, r) = \int_{0}^{\Theta} \dd \theta = \int_{u_-}^{u_+} \frac{\dd u}{u'} = \int_{1}^{r} \frac{\dd x}{\sqrt{I(x)}}.    
\]
More explicitly,
\[
\Theta(p, q, r) =
\begin{dcases}
    \int_{1}^{r} \frac{\dd x}{\sqrt{\left( 1 + \frac{r^q - 1}{r^p - 1}(x^p - 1) \right)^\frac{2}{q} - x^2}}, & pq \neq 0, \\
    \int_{1}^{r} \frac{\dd x}{\sqrt{\left( 1 + \frac{r^q - 1}{\log r} \log x \right)^\frac{2}{q} - x^2}}, & p = 0, \\
    \int_{1}^{r} \frac{\dd x}{\sqrt{r^\frac{2(x^p - 1)}{r^p - 1} - x^2}}, & q = 0.
\end{dcases}
\]
Note that the integral $\Theta(p, q, r)$ is finite. Since \eqref{eq:ode} is autonomous and even in $u'$, uniqueness of the initial-value problem gives reflection symmetry about every critical point. Namely, for any critical point $\theta_0$, $u(\theta_0 + \theta) = u(\theta_0 - \theta)$. It follows that $u$ is $2\Theta$-periodic. Since $u \in C^2(\sphere)$, there must be a $k \in \N$ such that $\Theta(p, q, r) = \pi/k$.

On the other hand, given $r > 1$ and $\Theta(p, q, r) = \pi/k$ for some $k \in \N$, relation \eqref{eq:u_-} uniquely determines $u_- > 0$ and we set $u_+ = ru_-$. Consider the unique local solution of \eqref{eq:ode} with initial data $(u, u') = (u_-, 0)$ at $0$. Since $q > p$, by monotonicity of $\Phi_s$, we have $u_- < 1$. Hence by \eqref{eq:critical} $u''(0) > 0$, and then $u$ is increasing for short time. Note that the first integral and \eqref{eq:u'} remain valid. Since $I > 0$ on $(1, r)$, $u$ is strictly increasing as long as $u_- < u < u_+$. Then $u$ reaches $(u, u') = (u_+, 0)$ at time $\Theta$. By continuation, $u$ is well-defined on $[0, \Theta]$. Again since \eqref{eq:ode} is autonomous and even in $u'$, uniqueness gives reflection symmetry about $0$ and $\Theta$. It follows that $u \in C^2(\sphere)$ is a nonconstant solution with period $2\Theta$. 

The above discussion leads to the following proposition.

\begin{proposition}
    Let $N(p, q)$ be defined as in theorem~\ref{thm:main}, then
    \[
    N(p, q) = \#\left\{ r > 1: \Theta(p, q, r) = \frac{\pi}{k}, k \in \N \right\},
    \]
    where $\#$ denotes the number of elements.
\end{proposition}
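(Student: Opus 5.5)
The plan is to build an explicit bijection between the set of nonconstant $C^2$ solutions of \eqref{eq:main} modulo rotation and the set $\{r>1 : \Theta(p,q,r)=\pi/k \text{ for some } k\in\N\}$. For $p\ge q$ both sides are empty: the discussion above shows every solution is constant. So we may assume $q>p$. The forward map sends a nonconstant solution $u$ to its ratio $r=u_+/u_-=\max u/\min u$. This is clearly rotation invariant. By the preceding analysis, $u$ is $2\Theta(p,q,r)$-periodic with minimal half-period exactly $\Theta(p,q,r)$, since $u$ is strictly monotone between a minimum and the adjacent maximum. Hence $\Theta(p,q,r)=\pi/k$ for some $k\in\N$, and the map is well defined into the set on the right.

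For surjectivity I use the construction already given. If $r>1$ satisfies $\Theta(p,q,r)=\pi/k$, then $u_-$ is determined by \eqref{eq:u_-}, and solving \eqref{eq:ode} from $(u_-,0)$ produces a nonconstant $C^2$ solution on $\sphere$ whose max/min ratio is $r$, because its maximum is $u_+=ru_-$.

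For injectivity, take two solutions $u,v$ with the same $r$. Relation \eqref{eq:u_-} gives them the same $u_-$. Rotate each so that its minimum sits at $\theta=0$. Then both solve \eqref{eq:ode} with the same initial data $(u_-,0)$, and uniqueness for the initial-value problem (the right-hand side is smooth on $(0,\infty)\times\R$) gives $u\equiv v$. Thus distinct $r$ correspond to distinct rotation classes, and a single $r$ yields only one class.

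The only delicate point is that the equation $\Theta(p,q,r)=\pi/k$ might hold for two different $k$ at the same $r$. This is impossible, because $\Theta$ is a function of $r$. So counting $r$ values, rather than pairs $(r,k)$, gives exactly $N(p,q)$. In particular, when the set is infinite (as at the exceptional points, where $\Theta$ is constant equal to some $\pi/k$), both sides are $\infty$.
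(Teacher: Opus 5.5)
Your proposal is correct and follows the paper's argument. The forward direction (a nonconstant solution has minimal period $2\Theta(p,q,r)$, which forces $\Theta=\pi/k$) and the converse construction from the initial data $(u_-,0)$, with $u_-$ fixed by \eqref{eq:u_-}, are exactly the discussion preceding the proposition; your explicit injectivity step (same $r$ gives the same $u_-$ via \eqref{eq:u_-}, then rotate the minimum to $\theta=0$ and apply uniqueness for the initial-value problem) spells out what the paper leaves implicit.
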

Consequently, the analysis of $N(p, q)$ reduces entirely to that of $\Theta(p, q, r)$. We collect here several known analytical properties of $\Theta(p, q, r)$ established in the work of Li--Wan\cite{LW26}.

\begin{lemma}[{\cite[Theorem~5.1]{LW26}}] \label{lem:pqmonotone}
    For $q > p$ and fixed $r$, we have $\Theta(p, q, r)$ is strictly increasing in $p$ and strictly decreasing in $q$.
\end{lemma}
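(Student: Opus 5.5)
The plan is to prove monotonicity of $\Theta(p,q,r)$ in $(p,q)$ pointwise in the integrand, after the substitution $\lambda = \Phi_p(x)/\Phi_p(r)$, which makes the upper and lower curves of $I$ comparable across parameters. In the notation of Subsection~\ref{sec:2.1} we have $x = J_p(\lambda)$ and $\sqrt{I(x)+x^2} = J_q(\lambda)$. Hence
\[
\Theta(p,q,r) = \int_0^1 \frac{J_p'(\lambda)\dd\lambda}{\sqrt{J_q(\lambda)^2 - J_p(\lambda)^2}} .
\]
The dependence on $q$ now sits only in $J_q$. Since $J_s(\lambda)$ is strictly increasing in $s$ for $\lambda\in(0,1)$, as already noted in the paper, the denominator strictly increases with $q$ while the numerator is unchanged. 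This gives strict decrease in $q$ for fixed $p$ and $r$. Integrability is not an issue: the integral is finite for each $q>p$, and the comparison is monotone.

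For $p$, the dependence enters both through $J_p'$ and through $J_p$ in the denominator, so a different substitution is preferable. I would write $y = J_p(\lambda)$, parametrizing by the value $x$ itself, and set $\mu = \Phi_q(y)/\Phi_q(r)$ as a function of $x$. Then
\[
\Theta = \int_1^r \frac{\dd x}{\sqrt{J_q(\Lambda_p(x))^2 - x^2}}, \qquad \Lambda_p(x) = \frac{\Phi_p(x)}{\Phi_p(r)} .
\]
Since $J_q$ is increasing in $\lambda$, it suffices to show that $\Lambda_p(x)$ is strictly decreasing in $p$ for each $x\in(1,r)$. Increasing $p$ then lowers $J_q(\Lambda_p(x))$, shrinks the denominator, and increases $\Theta$.

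The key step is therefore the claim that $p\mapsto \Phi_p(x)/\Phi_p(r)$ is strictly decreasing for $1<x<r$. Writing $t=\log$, we have
\[
\frac{\Phi_p(x)}{\Phi_p(r)} = \frac{\int_0^{\log x} e^{ps}\dd s}{\int_0^{\log r} e^{ps}\dd s} .
\]
This is the mass of $[0,\log x]$ under the normalized probability density proportional to $e^{ps}$ on $[0,\log r]$. The family $e^{ps}$ has a strictly monotone likelihood ratio in $s$, so increasing $p$ shifts mass to the right. Equivalently, the derivative in $p$ equals the covariance expression $\mathbb E[s\mathbf 1_{s\le \log x}] - \mathbb E[s]\,\mathbb P(s\le\log x)$, which is negative. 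Thus the distribution function at $\log x$ strictly decreases, and this handles $p=0$ uniformly.

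The main obstacle I expect is justifying the comparison of the integrals near the endpoints $x=1$ and $x=r$, where the integrands blow up like inverse square roots. Pointwise monotonicity of the integrand suffices for the non-strict inequality, by monotone comparison of nonnegative integrands. Strictness follows because the pointwise inequality is strict on the open interval and both integrals are finite. Finally, the constraint $q>p$ must be preserved along the variation, which holds since we vary one parameter within the region $q>p$.
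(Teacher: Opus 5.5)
Your proof is correct. The paper itself gives no proof of this lemma: it imports it from Li--Wan \cite[Theorem~5.1]{LW26}. So there is no internal argument to compare with, and your proposal supplies a self-contained replacement.

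Its core is the single identity $I(x)=J_q(\Lambda_p(x))^2-x^2$, with $\Lambda_p(x)=\Phi_p(x)/\Phi_p(r)$, applied for each fixed $x\in(1,r)$.

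\emph{Dependence on $q$.} It enters only through $J_q$. Since $J_q(\lambda)$ is the weighted power mean of $1$ and $r$ with weights $1-\lambda,\lambda$, it is strictly increasing in $q$. This is exactly the fact the paper already uses to prove $I>0$.

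\emph{Dependence on $p$.} It enters only through $\Lambda_p(x)$, and $J_q$ is strictly increasing in $\lambda$. So everything reduces to the strict decrease of $p\mapsto\Phi_p(x)/\Phi_p(r)$. Your exponential-tilting (covariance) computation proves this correctly and treats $p=0$ the same way as every other $p$.

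\emph{Endpoints.} The endpoint issue you flag is not a real obstacle. Both integrals are finite, and the difference of the integrands is strictly positive on $(1,r)$.

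Three cosmetic points:
\begin{enumerate}
\item The $\lambda$-substitution is unnecessary. In the $x$-form, $\Lambda_p$ is already independent of $q$, so both monotonicities come from the same formula.
\item The auxiliary quantity $\mu=\Phi_q(y)/\Phi_q(r)$ is never used and should be dropped.
\item ``Writing $t=\log$'' should read ``substituting $t=e^s$'', which gives $\Phi_p(x)=\int_0^{\log x}e^{ps}\dd s$.
\end{enumerate}

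What your route buys is monotonicity of the integrand pointwise in $x$, not just of the integral. This makes the paper's later uses of the lemma, such as lemma~\ref{lem:bound}, independent of the external reference.
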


\begin{lemma}[{\cite[Theorem~3.1]{LW26}}] \label{lem:limit}
    For $q > p$, one has
    \begin{align}
        \lim_{r \to 1} \Theta(p, q, r) & = \frac{\pi}{\sqrt{q - p}}, \label{eq:limit1} \\
        \lim_{r \to \infty} \Theta(p, q, r) & =
        \begin{dcases}
            \frac{q}{q - p} \frac{\pi}{2}, & 0 \le p < q, \\
            \frac{\pi}{2}, & p < 0 < q, \\
            \frac{-p}{q - p} \frac{\pi}{2}, & p < q \le 0.
        \end{dcases} \label{eq:limit2}
    \end{align}
\end{lemma}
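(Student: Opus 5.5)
The statement is Lemma~\ref{lem:limit}, which records two endpoint limits of $\Theta(p,q,r)$. I would prove both by rescaling the variable of integration so that the integrand converges pointwise to an explicit limit, and then justifying the passage to the limit by dominated convergence. Throughout I use the representation $\Theta=\int_1^r \dd x/\sqrt{I(x)}$ together with the functions $J_s$ from Subsection~\ref{sec:2.1}.

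\textbf{The limit $r\to1$.} Write $\delta=\log r\to0$ and substitute $x=J_p(\lambda)$, $\lambda\in[0,1]$, so that $I(x)=J_q(\lambda)^2-J_p(\lambda)^2$. Expanding $\log J_s(\lambda)=\frac1s\log\bigl(1+(r^s-1)\lambda\bigr)$ in $\delta$ gives
\[
\log J_s(\lambda)=\delta\lambda+\tfrac{s}{2}\,\delta^2\lambda(1-\lambda)+O(\delta^3),
\]
uniformly for $\lambda\in[0,1]$; the case $s=0$ is exact. Hence
\[
I=J_p^2\bigl(e^{2(\log J_q-\log J_p)}-1\bigr)=(q-p)\,\delta^2\lambda(1-\lambda)\bigl(1+O(\delta)\bigr).
\]
The key point is that the remainder also carries the factor $\lambda(1-\lambda)$. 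This holds because $\log J_q-\log J_p$ vanishes at $\lambda=0$ and at $\lambda=1$, and it is analytic in $(\lambda,\delta)$. Similarly $\dd x=J_p'(\lambda)\dd\lambda=\delta\bigl(1+O(\delta)\bigr)\dd\lambda$. Therefore
\[
\Theta=\int_0^1\frac{(1+O(\delta))\dd\lambda}{\sqrt{(q-p)\lambda(1-\lambda)}}\longrightarrow\frac{\pi}{\sqrt{q-p}} .
\]

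\textbf{The limit $r\to\infty$, case $0<p<q$.} Put $x=ry$ with $y\in[1/r,1]$. Then $\frac{x^p-1}{r^p-1}\to y^p$ and $\bigl(1+(r^q-1)(\cdot)\bigr)^{2/q}\sim r^2y^{2p/q}$. So
\[
\Theta\to\int_0^1\frac{\dd y}{\sqrt{y^{2p/q}-y^2}} .
\]
The substitution $z=y^{(q-p)/q}$ turns this into $\frac{q}{q-p}\int_0^1\frac{\dd z}{\sqrt{1-z^2}}=\frac{q}{q-p}\frac{\pi}{2}$.

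\textbf{The limit $r\to\infty$, cases $p=0<q$ and $p<0<q$.} When $p=0$, the ratio $\log x/\log r$ tends to $1$ for $y$ fixed, and the limit is $\pi/2$; this agrees with the formula $\frac{q}{q-p}\frac{\pi}{2}$ at $p=0$. When $p<0<q$, the ratio $\frac{1-x^p}{1-r^p}$ tends to $1$ on the scale $x=ry$, so $I\sim r^2(1-y^2)$ and the limit is again $\pi/2$.

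In each of these cases I must also show that the region where $x$ is bounded contributes nothing. For instance, when $p<0<q$ one has $I\gtrsim r^2(1-x^p)^{2/q}$ there. This contributes $O\bigl(r^{-1}\int \dd x/(1-x^p)^{1/q}\bigr)$ over $x\le \varepsilon r$, which together with the $y$-integral on $[0,\varepsilon]$ is small. The same argument handles $p\ge0$.

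\textbf{The limit $r\to\infty$, case $p<q\le0$.} I would either use the $p\leftrightarrow -q$ duality from \cite{LW26}, which sends the formula for $0\le p<q$ to $\frac{-p}{q-p}\frac{\pi}{2}$, or repeat the computation directly. For the direct route, with $q<0$ one has $(1+(r^q-1)\mu)^{2/q}\approx(1-\mu)^{2/q}$. The mass of the integral then concentrates where $\mu=\frac{1-x^p}{1-r^p}$ is close to $1$, and after the analogous rescaling one obtains the Beta-type integral that evaluates to $\frac{-p}{q-p}\frac{\pi}{2}$.

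\textbf{Main obstacle.} The delicate part is the uniform lower bound on $I$ needed for dominated convergence near the endpoints $y\to0$ and $y\to1$ as $r\to\infty$. At $y\to1$ the singularity is $(1-y)^{-1/2}$, and one must bound $I$ from below by $c\,r^2(1-y)$ uniformly in $r$. I would get this from the strict concavity gap $J_q(\lambda)-J_p(\lambda)$ near $\lambda=1$, estimated via the derivative of $J_q-J_p$ at $\lambda=1$, which scales like $r$.
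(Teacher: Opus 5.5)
The paper does not prove this lemma. It is imported from Li--Wan \cite[Theorem~3.1]{LW26}, so there is no internal argument to match, and your direct computation from $\Theta=\int_1^r\dd x/\sqrt{I}$ is an independent route. Its skeleton is sound and the limiting values are right.

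The $r\to1$ part is complete. Writing $\log J_q-\log J_p=\delta^2\lambda(1-\lambda)k(\lambda,\delta)$ with $k$ analytic and $k(\lambda,0)=\frac{q-p}{2}$ is exactly what makes the error uniform up to $\lambda=0,1$. For $r\to\infty$, two steps need correcting.

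\textbf{The case $p<q\le0$.} Rely on the duality \eqref{eq:dual3}, your first option. The direct alternative you sketch for $q<0$ is wrong, because the integral does not concentrate on one scale. Since $1+(r^q-1)\mu\approx x^p+r^q$, the approximation $(1+(r^q-1)\mu)^{2/q}\approx(1-\mu)^{2/q}\approx x^{2p/q}$ holds only for $x\ll r^{q/p}$. The range $1\le x\le A$ tends, as $r\to\infty$ and then $A\to\infty$, to
\[
\int_1^\infty\frac{\dd x}{\sqrt{x^{2p/q}-x^2}}=\frac{-q}{q-p}\,\frac{\pi}{2}.
\]
On the range $x=ry$, with $y$ in compact subsets of $(0,1]$, one has $(1+(r^q-1)\mu)^{2/q}=r^2(1+O(r^{p-q}))$, and this range contributes a further $\pi/2$. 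The intermediate range is negligible. Only the sum equals $\frac{-p}{q-p}\frac{\pi}{2}$. At $(p,q)=(-2,-1)$, for instance, each scale gives $\pi/2$, consistent with $\Theta\equiv\pi$.

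\textbf{The layer near $x=1$.} The bound $I\gtrsim r^2(1-x^p)^{2/q}$ that you use on $x\le\varepsilon r$ (for $p<0<q$) is not adequate near $x=1$. Put $a=(r^q-1)\frac{1-x^p}{1-r^p}\approx|p|r^q(x-1)$.
\begin{itemize}
\item For $q>2$, $(1+a)^{2/q}-1\approx\frac2q a\ll a^{2/q}$ when $a\ll1$, so the bound is false in the layer $x-1\ll r^{-q}$.
\item For $q\le1$ the bound is true, but the majorant $(1-x^p)^{-1/q}\sim(x-1)^{-1/q}$ is not integrable at $x=1$.
\end{itemize}
The repair is the two-regime inequality $(1+a)^{2/q}-1\ge c_q\min\{a,a^{2/q}\}$, with $x^2-1$ absorbed for $r$ large and $\varepsilon$ small. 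Where $a\le1$ it gives $I\gtrsim r^q(x-1)$, contributing $O(r^{-q})$. Where $a\ge1$, i.e.\ where $x-1\gtrsim r^{-q}$, it gives $I\gtrsim r^2(1-x^p)^{2/q}$, contributing $O(\varepsilon)+o(1)$ for every $q>0$. The same repair is needed for $p\ge0$.

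By contrast, the endpoint $y\to1$ that you single out as the main obstacle is routine.
\begin{itemize}
\item For $p\le0<q$, the $y$-derivative of $r^{-2}(1+a)^{2/q}$ is $o(1)$ uniformly on $[\frac12,1]$, so $I\ge r^2(1-y)$ there.
\item For $p>0$, $r^{-2}I(ry)\to y^{2p/q}-y^2$ in $C^1[\frac12,1]$. The limit has slope $\frac{2p}{q}-2<0$ at $y=1$, which gives $I\ge c\,r^2(1-y)$ near $y=1$.
\end{itemize}
Your remark that the $\lambda$-derivative of $J_q-J_p$ at $\lambda=1$ ``scales like $r$'' is true only for $p>0$.

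Finally, although the paper does not use it for this lemma, its own representation gives a shorter proof. Take $p\le0<q$. By \eqref{eq:W(E)}, $r\to1$ and $r\to\infty$ correspond to $E\to0$ and $E\to\infty$. From $U(y)=\frac{y^2}{2(q-p)}+O(y^3)$ one gets $W'(x)=\sqrt{2(q-p)/x}\,(1+O(x))$. Moreover $W'$ decreases to $q-p$. Dominated convergence in \eqref{eq:Theta(E)} then yields
\[
\Theta(0+)=\frac{1}{\sqrt{q-p}}\int_0^1\frac{\dd s}{\sqrt{s(1-s)}}=\frac{\pi}{\sqrt{q-p}}, \qquad \Theta(\infty)=\int_0^\infty K=\frac{\pi}{2}.
\]
The remaining regions follow from \eqref{eq:dual1} and \eqref{eq:dual3}. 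That route avoids all the tail bookkeeping above. Yours has the merit of using nothing beyond the definition of $\Theta$.
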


\begin{lemma}[{\cite[Theorem~4.1]{LW26}}] \label{lem:duality}
    For $q > p$, one has
    \begin{align}
        \Theta \left( -\frac{pq}{q - p}, q, r^\frac{q - p}{q} \right) & = \frac{q - p}{q} \Theta(p, q, r), & q > 0, \label{eq:dual1} \\
        \Theta \left(p, \frac{pq}{q - p}, r^\frac{q - p}{-p} \right) & = \frac{q - p}{-p} \Theta(p, q, r), & p < 0, \label{eq:dual2} \\
        \Theta(-q, -p, r) & = \Theta(p, q, r). \label{eq:dual3}
    \end{align}
\end{lemma}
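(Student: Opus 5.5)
The three identities in Lemma~\ref{lem:duality} should all follow from changes of variables in the period integral, once $\Theta$ is written symmetrically in the support variable $x$ and the radial variable. Set
\[
y := \Big(1+(r^q-1)\tfrac{\Phi_p(x)}{\Phi_p(r)}\Big)^{1/q}
\]
(with the obvious modification when $q=0$). For a solution, $x=u/u_-$ and $y=\sqrt{(u')^2+u^2}/u_-$ is the normalized radial function. The defining relation
\begin{equation*}
\frac{\Phi_q(y)}{\Phi_q(r)}=\frac{\Phi_p(x)}{\Phi_p(r)}
\end{equation*}
is then symmetric in form, with $y$ running from $1$ to $r$ as $x$ does, and $\Theta=\int_1^r dx/\sqrt{y^2-x^2}$. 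Geometrically, $\cos\alpha=x/y$, where $\alpha$ is the angle between the outer normal and the position vector. Between a minimum and the adjacent maximum of $u$, the normal angle $\theta$ and the polar angle $\varphi$ each advance by exactly $\Theta$, because both directions coincide at the two symmetric critical points. I will therefore derive a second formula $\Theta=\int_1^r x\,dy/(y\sqrt{y^2-x^2})$ from $d\varphi=\tan\alpha\,d\log\rho$, checking it directly from $\rho^2=u^2+(u')^2$. This polar-angle formula is the tool for all three identities.

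\emph{Identity \eqref{eq:dual3}.} This is polar duality, $h_{K^\circ}=1/\rho_K$ and $\rho_{K^\circ}=1/h_K$. Concretely, I set $X=r/y$ and $Y=r/x$. A direct computation gives
\[
\frac{\Phi_{-q}(r/y)}{\Phi_{-q}(r)}=1-\frac{\Phi_q(y)}{\Phi_q(r)},\qquad
\frac{\Phi_{-p}(r/x)}{\Phi_{-p}(r)}=1-\frac{\Phi_p(x)}{\Phi_p(r)}.
\]
Hence $(X,Y)$ satisfies the defining relation for the parameters $(-q,-p)$ with the same $r$. The endpoints are exchanged, and $X/Y=x/y$, so $\alpha$ is preserved. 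The normal-angle integral for $(-q,-p)$, namely $\int dX/\sqrt{Y^2-X^2}$, transforms into the polar-angle integral for $(p,q)$. The two agree, which yields \eqref{eq:dual3}.

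\emph{Identities \eqref{eq:dual1} and \eqref{eq:dual2}.} These come from the power maps $\rho\mapsto\rho^{a}$ in polar coordinates, which rescale the angle. The algebraic input is $\Phi_s(t^a)=a\,\Phi_{as}(t)$. For \eqref{eq:dual1}, take $a=(q-p)/q$ and substitute so that the new radial and support variables are suitable powers of $(x,y)$. The relation then becomes the one for $(-pq/(q-p),q)$ with endpoint $r^{a}$, and the angle element is multiplied by $a$. Checking that the angle element transforms by exactly this factor is done with the two expressions $d\theta=dx/\sqrt{y^2-x^2}$ and $d\varphi=x\,dy/(y\sqrt{y^2-x^2})$. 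Identity \eqref{eq:dual2} follows either by the same computation with $a=(q-p)/(-p)$, or by conjugating \eqref{eq:dual1} with \eqref{eq:dual3}: apply \eqref{eq:dual3}, then \eqref{eq:dual1} with $-p>0$ in the role of $q$, then \eqref{eq:dual3} again. The degenerate cases $p=0$ or $q=0$ are handled by continuity in $(p,q)$.

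\emph{Main obstacle.} The hardest step is identifying the correct substitution for \eqref{eq:dual1}. The naive pair $(x^a,y^a)$ does not preserve the relation $\cos\alpha=x/y$, so the new support/radial pair must be reconstructed from the transformed curve. That means verifying that the image of the curve is again the boundary of a convex body solving the $(p',q)$ problem, and computing its new $\alpha$ with $\tan\alpha'=a\tan\alpha$. I would first attempt this via the ODE, checking that $\rho^{a}$ as a function of $a\varphi$ satisfies the transformed equation, and only then translate the result into the integral identity.
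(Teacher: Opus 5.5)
The paper does not prove this lemma; it imports it from Li--Wan \cite{LW26}. So I am judging your plan on its own.

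\textbf{What works.} Your proof of \eqref{eq:dual3} is correct. The two $\Phi$-identities hold, $(r/y,r/x)$ lies on the $(-q,-p)$ curve with the endpoints swapped, and $dX/\sqrt{Y^2-X^2}$ turns into $x\,dy/(y\sqrt{y^2-x^2})$. One slip: $d\varphi=\cot\alpha\,d\log\rho$, not $\tan\alpha\,d\log\rho$. Your integral formula is nonetheless right. It also follows without any solution arc from
\[
\frac{dx}{\sqrt{y^2-x^2}}-\frac{x\,dy}{y\sqrt{y^2-x^2}}=d\arcsin(x/y),
\]
since $x=y$ at both endpoints. Deducing \eqref{eq:dual2} from \eqref{eq:dual1} by conjugating with \eqref{eq:dual3} is also legitimate: it needs $-p>0$, which is exactly the hypothesis there.

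\textbf{The gap: \eqref{eq:dual1} uses the wrong map.} You flag this step as the crux, and the map you propose for it does not work.

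\emph{Why your map fails.} The polar-coordinate power map $\rho(\varphi)\mapsto\rho(\varphi/a)^a$ is $z\mapsto z^a$. It is conformal, so it preserves $\alpha$; it does not give $\tan\alpha'=a\tan\alpha$. In your variables it sends $(x,y)\mapsto(y^{a-1}x,y^a)$. For $a=(q-p)/q$ and target $(p',q)$, the image is not a solution arc. Take $(p,q)=(-1,1)$, so $a=2$, and $r=4$, $x=2$; then $y=3$. The image is $(6,9)$, which would need $\Phi_1(9)/\Phi_1(16)=8/15$ to equal $\Phi_{1/2}(6)/\Phi_{1/2}(16)=(\sqrt6-1)/3$. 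It does not. This polar power map actually realizes \eqref{eq:dual2}, with exponent $(q-p)/(-p)$ and target $(p,pq/(q-p))$.

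\emph{The map that works.} For \eqref{eq:dual1} you need the power map in support coordinates, $h(\theta)\mapsto h(\theta/a)^a$. Equivalently, apply $z\mapsto z^a$ to $K^\circ$. In your variables this is $x'=x^a$, $y'=x^{a-1}y$. It also preserves $x/y$, fixes the endpoint $1$, and sends $r$ to $r^a$. Set $\lambda=\Phi_p(x)/\Phi_p(r)=\Phi_q(y)/\Phi_q(r)$ and take $p\neq0$ (for $p=0$ one has $a=1$ and nothing to prove). Use $ap'=-p$, $aq=q-p$ and $y^q-x^p=\lambda(r^q-r^p)$. Then
\[
\frac{\Phi_q(y')}{\Phi_q(r^a)}=\frac{x^{-p}y^q-1}{r^{q-p}-1}=\lambda\Big(\frac{r}{x}\Big)^{p}=\frac{x^{-p}-1}{r^{-p}-1}=\frac{\Phi_{p'}(x')}{\Phi_{p'}(r^a)},\qquad \frac{dx'}{\sqrt{y'^2-x'^2}}=\frac{a\,dx}{\sqrt{y^2-x^2}} .
\]
That is the whole proof of \eqref{eq:dual1}; no ODE check or reconstruction of the transformed curve is needed. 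The underlying reason is the first integral $qh^p-p\rho^q=\mathrm{const}$.

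As written, your argument for \eqref{eq:dual1}, and therefore for \eqref{eq:dual2}, is incomplete. The step it rests on would fail.
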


\subsection{The new representation} \label{sec:2.2}

We now derive the new representation of $\Theta$. In the subsequent analysis, we only consider the region $p \le 0 < q$, while the results for the remaining regions follow immediately by duality~\eqref{eq:dual1} and \eqref{eq:dual2}.

For a nonconstant solution $u$, fix $u(0) = u_-$, then $u(\Theta) = u_+$ and $u' > 0$ on $(0, \Theta)$. Define
\begin{equation} \label{eq:eta}
    \eta := \theta + \arctan \frac{u'}{u}.
\end{equation}
Geometrically, this represents the polar angle of the position vector of the convex body with support function $u$, or equivalently, the normal angle of its dual body. By \eqref{eq:main}, a direct calculation gives $\eta' = u^p((u')^2 + u^2)^{-q/2} > 0$. For convenience, define
\[
y := \log \eta' = p \log u - \frac{q}{2} \log((u')^2 + u^2).
\]
Note that $qe^y - p > 0$ always holds under $p \le 0 < q$. Then
\begin{equation} \label{eq:y'}
    y' = -(qe^y - p) \frac{u'}{u} < 0
\end{equation}
on $(0, \Theta)$. Define
\[
U(y) := \int_{0}^{y} \frac{e^t - 1}{qe^t - p} \dd t =
\begin{dcases}
    \frac{y}{p} - \frac{q - p}{qp} \log \frac{qe^y - p}{q - p}, & p \neq 0, \\
    \frac{y + e^{-y} - 1}{q}, & p = 0.
\end{dcases}
\]
By direct computation, we obtain
\[
\frac{\dd}{\dd \theta} \left[ \frac{1}{2} \log \left( 1 + \frac{(u')^2}{u^2} \right) + U(y) \right] = 0.
\]
Thus there exists a constant $E$ such that
\begin{equation} \label{eq:energy}
    \frac{1}{2} \log \left( 1 + \frac{(u')^2}{u^2} \right) + U(y) = E.
\end{equation}
Here, $E$ serves as our new parameter. In what follows, we will parameterize $\Theta$ in terms of $E$ instead of the original $r$. Under this choice of parameter, the integral exhibits a much more favorable structure. Note that
\[
U(0) = U'(0) = 0, \quad U''(y) = \frac{(q - p)e^y}{(qe^y - p)^2} > 0, \quad U(y) \to \infty \text{ as } y \to \pm \infty.
\]
Thus $U$ is strictly convex and $y = 0$ is the unique minimum point. Hence for every $x > 0$, $U(y) = x$ has two branches $y_L(x) < 0 < y_R(x)$. Define
\[
W(x) := y_R(x) - y_L(x).
\]
Clearly $W' > 0$. Note that
\begin{equation} \label{eq:W(E)}
    W(E) = y_R(E) - y_L(E) = -(q - p) \log u_- + (q - p) \log u_+ = (q - p) \log r.
\end{equation}
It follows that $E$ maps $r \in (1, \infty)$ increasingly onto $E \in (0, \infty)$. At a critical point $u' = 0$, by \eqref{eq:eta}, we have $\eta = \theta$. Then by \eqref{eq:y'} and \eqref{eq:energy}, we obtain
\begin{align*}
    \Theta & = \frac{1}{q - p} \int_{0}^{\Theta} (q \dd \eta - p \dd \theta) = \frac{1}{q - p} \int_{0}^{\Theta} (qe^y - p) \dd \theta \\
    & = \frac{1}{q - p} \int_{0}^{\Theta} -\frac{y'}{u'/u} \dd \theta = \frac{1}{q - p} \int_{y_L(E)}^{y_R(E)} \frac{\dd y}{\sqrt{e^{2(E - U(y))} - 1}} \\
    & = \frac{1}{q - p} \int_{0}^{E} \frac{W'(x) \dd x}{\sqrt{e^{2(E - x)} - 1}}.
\end{align*}
For brevity, we write $K(t) = (e^{2t} - 1)^{-1/2}$ and for fixed $(p, q)$, we arrive at a convolution representation
\begin{equation} \label{eq:Theta(E)}
    \Theta(E) := \frac{1}{q - p} (K \ast W')(E).
\end{equation}
In view of relation \eqref{eq:W(E)}, studying the monotonicity with respect to $E$ intuitively translates into that with respect to $r$.

\subsection{The analysis of the width} \label{sec:2.3}
To study $\Theta(E)$, we now turn to the properties of $W'(x)$. Note that
\[
U'' = \frac{1}{q - p} (1 - pU')(1 - qU').
\]
Differentiating $U(y_i(x)) = x$, we obtain
\begin{equation} \label{eq:y_i-ode}
    y_i'' = -\frac{1}{q - p} y_i'(y_i' - p)(y_i' - q), \quad i = L, R.
\end{equation}
Note that for $x > 0$, we have $y_L'(x) < p \le 0 < q < y_R'(x)$. Thus $y_R''(x) < 0 < y_L''(x)$. Recall that $W' = y_R' - y_L'$. For the subsequent analysis, we introduce $H := y_R' + y_L'$. Then by direct computation from \eqref{eq:y_i-ode}, one has
\[
W'' = -\frac{(W')^3 + CW'}{4(q - p)},
\]
where
\begin{equation} \label{eq:C}
    C(x) := 3\left( H(x) - \frac{2(q + p)}{3} \right)^2 - \frac{(q + p)^2}{3} - (q - p)^2.
\end{equation}
In order to transform the ODE into a linear equation, we introduce $Y := (W')^{-2}$. Thus one has
\begin{equation} \label{eq:Y-ode}
    Y' = \frac{CY + 1}{2(q - p)}.
\end{equation}
Next, we need to analyze $H(x)$, mainly focusing on two properties. The first is that
\begin{equation} \label{eq:|H'|}
    |H'| = |y_R'' + y_L''| < -y_R'' + y_L'' = -W'' = \frac{Y'}{2Y^{3/2}}.
\end{equation}
The second is the following lemma.

\begin{lemma} \label{lem:H}
    For $p \le 0 < q$, if $q + p = 0$, then $H \equiv 0$; if $q + p \neq 0$, then
    \[
    \left( H - \frac{2(q + p)}{3} \right)H' > 0.
    \]
\end{lemma}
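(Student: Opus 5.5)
The plan is to reduce everything to the two branch equations \eqref{eq:y_i-ode} and a ``first touching point'' argument. I write $a := y_R'$, $b := y_L'$, $c := \frac{2(q+p)}{3}$ and $f(s) := -\frac{s(s-p)(s-q)}{q-p}$, so that $a' = f(a)$, $b' = f(b)$ and $H = a + b$. Recall $b < p \le 0 < q < a$ for $x > 0$.

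\emph{The symmetric case.} If $q + p = 0$, then $U'(y) = \frac{e^y - 1}{q(e^y + 1)} = \frac{1}{q}\tanh\frac{y}{2}$ is odd. Hence $U$ is even, so $y_L = -y_R$ and $H \equiv 0$.

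\emph{A second-order identity at critical points of $H$.} Assume from now on $q + p \ne 0$. Suppose $H'(x_0) = 0$ for some $x_0 > 0$. Then $f(a) = -f(b) =: F$, where $F = y_R''(x_0) < 0$. Differentiating $H' = f(a) + f(b)$ gives
\[
H'' = f'(a)f(a) + f'(b)f(b) = F\,\bigl(f'(a) - f'(b)\bigr).
\]
Since $f'(s) = -\frac{3s^2 - 2(p+q)s + pq}{q-p}$, a direct factorization yields
\[
f'(a) - f'(b) = -\frac{3(a-b)(H - c)}{q-p},
\]
so
\[
H''(x_0) = -\frac{3F(a-b)}{q-p}\,(H(x_0) - c).
\]
Because $F < 0$ and $a - b = W' > 0$, $H''(x_0)$ has the same strict sign as $H(x_0) - c$. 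Consequently every critical point of $H$ in $\{H > c\}$ is a strict local minimum, every critical point in $\{H < c\}$ is a strict local maximum, and a critical point with $H = c$ is degenerate to second order.

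\emph{Boundary behaviour.} As $x \to \infty$ we have $y_R \to \infty$ with $y_R' \to q$ (since $U' \to 1/q$), and $y_L \to -\infty$ with $y_L' \to p$ (since $U' \to 1/p$; when $p = 0$, $y_L' \to 0 = p$). Hence $H \to p + q$. As $x \to 0^+$, expanding $U(y) = \frac{U''(0)}{2}y^2 + \frac{U'''(0)}{6}y^3 + \frac{U^{(4)}(0)}{24}y^4 + \cdots$ with $U''(0) = \frac{1}{q-p}$ and $U'''(0) = -\frac{q+p}{(q-p)^2}$ gives
\[
y_R + y_L = -\frac{2U'''(0)}{3U''(0)^2}\,x + O(x^2) = c\,x + O(x^2),
\]
so $H(0^+) = c$. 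I would push the expansion one order further to get $H = c + d\,x + o(x)$, and hence $H' \to d$, and verify that $d$ has the sign of $q + p$. This gives $Q := (H - c)H' > 0$ on some interval $(0, \varepsilon)$. This expansion is the step I expect to be the computational obstacle: it requires the fourth-order term of $U$ and the analytic dependence of the branches on $\sqrt{x}$.

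\emph{Continuation.} Let $x_0$ be the first zero of $Q$. There are two possibilities.
\begin{enumerate}[label=(\roman*)]
\item $H(x_0) = c$ with $H'(x_0) \ne 0$. Then $H - c$ and $H'$ have opposite signs just to the left of $x_0$, contradicting $Q > 0$ on $(0, x_0)$.
\item $H'(x_0) = 0$ with $H(x_0) \ne c$. Say $H > c$ (the case $q + p > 0$). Then $H' > 0$ on $(0, x_0)$, which forces $H''(x_0) \le 0$, contradicting the strict positivity obtained from the second-order identity. The case $H < c$ is symmetric.
\end{enumerate}
The only remaining possibility, $H = c$ and $H' = 0$ simultaneously, is excluded by uniqueness for the autonomous system $(a, b)' = (f(a), f(b))$ combined with the same expansion. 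Thus $Q > 0$ on all of $(0, \infty)$.

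As a consistency check, the limit $H \to p + q$ lies on the same side of $c$ as the initial motion, since $|p + q| > \frac{2}{3}|p + q|$. If the Taylor computation of $d$ proves messy, my fallback is to use only this limit together with the ``local min only / local max only'' dichotomy, which already rules out any oscillation of $H$ around its monotone profile.
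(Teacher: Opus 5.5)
Your plan is the paper's proof. It uses the evenness of $U$ when $q+p=0$. It then combines $H(0+)=c$, the sign of $H'(0+)$, and a first-critical-point contradiction, based on the fact that wherever $H'=0$, $H''$ has the sign of $H-c$. Your identity $H''=-\frac{3F(a-b)}{q-p}(H-c)$ is exactly the paper's $H''=\frac{(3H-2(q+p))Y'}{4(q-p)Y^2}$ at critical points. You derive it directly from the branch equations rather than through $Y$.

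\textbf{The open step: the sign of $d=H'(0+)$.} The paper simply records this value as $H'(0+)=\frac{(q+p)[9(q-p)^2-(q+p)^2]}{135(q-p)}$. It does not require fourth-order terms of $U$. Rewriting $f(a)+f(b)$ in terms of $H=a+b$ and $W'=a-b$ gives
\[
H'=-\frac{1}{4(q-p)}\Bigl[3(W')^2(H-c)+H\bigl((H-(q+p))^2-(q-p)^2\bigr)\Bigr].
\]
You already noted the analyticity in $x$, which gives $(W')^2x\to 2(q-p)$ and $(H-c)/x\to d$. Letting $x\to0^+$ then yields $\tfrac52\,d=\frac{c}{4(q-p)}\bigl[(q-p)^2-\tfrac{(q+p)^2}{9}\bigr]$. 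This has the sign of $q+p$, because $q-p\ge|q+p|$ for $p\le 0<q$.

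\textbf{The case $H=c$, $H'=0$.} The same formula shows that $H=c$ forces $H'=\frac{c}{4(q-p)}\bigl[(q-p)^2-\tfrac{(q+p)^2}{9}\bigr]\ne0$. This is the real reason $H=c$ and $H'=0$ never hold together. ``Uniqueness for the autonomous system'' does not give it, since uniqueness alone does not stop a trajectory from passing through a given point of the $(a,b)$-plane. In your main route the case cannot even arise. On $(0,x_0)$ both factors of $Q$ keep the sign of $d$, so $H$ moves strictly monotonically away from $c$ and $H(x_0)\ne c$.

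\textbf{Your fallback.} With this fix, the fallback also works without computing $d$. Take $q+p>0$.
\begin{itemize}
\item $H$ cannot dip below $c$. Since $H(0+)=c$ and $H\to p+q>c$, that would force an interior local minimum in $\{H<c\}$, where critical points are strict local maxima.
\item $H$ cannot touch $c$ at an interior point, by the formula above.
\item $H$ cannot have a critical point in $\{H>c\}$. Such a point would be a local minimum, and since $H(0+)=c$, this forces an earlier local maximum in the same set.
\end{itemize}
Hence $H>c$ and $H'>0$ on $(0,\infty)$.
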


\begin{proof}
    If $q + p = 0$, then
    \[
    U'(y) = \frac{1}{q} \tanh \frac{y}{2}, \quad U(y) = \frac{2}{q} \log \cosh \frac{y}{2}.
    \]
    Since $U(y)$ is even, then $y_L(x) = -y_R(x)$. Hence $H(x) = y_R'(x) + y_L'(x) = 0$.

    If $q + p \neq 0$, by direct calculation from \eqref{eq:y_i-ode} and \eqref{eq:Y-ode}, we obtain
    \[
    H' = -\frac{1}{4(q - p)} \left[ \frac{3H - 2(q + p)}{Y} + H[(H - (q + p))^2 - (q - p)^2] \right],
    \]
    and
    \begin{equation} \label{eq:H-ode}
        H'' = -\frac{1 + (q - p)Y'}{2(q - p)Y}H' + \frac{(3H - 2(q + p))Y'}{4(q - p)Y^2}.
    \end{equation}
    Note that
    \[
    H(0+) = \frac{2(q + p)}{3}, \quad H'(0+) = \frac{(q + p)[9(q - p)^2 - (q + p)^2]}{135(q - p)}.
    \]
    At a critical point $H' = 0$, equation \eqref{eq:H-ode} becomes
    \[
    H'' = \frac{(3H - 2(q + p))Y'}{4(q - p)Y^2}.
    \]
    By \eqref{eq:|H'|}, $Y' > 0$. If $q + p > 0$, then $H'(0+) > 0$. We claim that $H' > 0$ on $(0, \infty)$. Otherwise, if $H'$ has a first zero $x_0 > 0$, then $3H(x_0) - 2(q + p) > 0$. Hence $H''(x_0) > 0$ is a local minimum, which is a contradiction. Therefore $3H - 2(q + p) > 0$ and this lemma holds. The argument in the case $q + p < 0$ is similar. 
\end{proof}

This immediately gives

\begin{corollary} \label{cor:C}
    For $p \le 0 < q$, if $q + p = 0$, then $C$ is constant; if $q + p \neq 0$, then
    \begin{equation} \label{eq:C'}
        C' = 6 \left( H - \frac{2(q + p)}{3} \right) H' > 0.
    \end{equation}
\end{corollary}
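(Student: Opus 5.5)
The corollary follows by differentiating the definition \eqref{eq:C} of $C$ and then appealing to Lemma~\ref{lem:H}. The parameters $p$ and $q$ are fixed, so the terms $-\frac{(q+p)^2}{3}-(q-p)^2$ are constants and only the squared term depends on $x$. The chain rule therefore gives
\[
C'(x) = 6\left(H(x)-\frac{2(q+p)}{3}\right)H'(x)
\]
for every $x>0$. This is the identity in \eqref{eq:C'}. Its use requires $H$ to be differentiable on $(0,\infty)$. That holds because $y_L$ and $y_R$ are smooth inverse branches of the smooth, strictly convex function $U$ on either side of its unique minimum point $y=0$, where $U'\neq 0$. Hence $H=y_R'+y_L'$ is smooth on $(0,\infty)$.

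The proof then splits into the two cases of Lemma~\ref{lem:H}. If $q+p=0$, the lemma gives $H\equiv 0$. Substituting into \eqref{eq:C} yields
\[
C \equiv 3\cdot 0 - 0 - (q-p)^2 = -(2q)^2,
\]
which is constant, as claimed. If $q+p\neq 0$, the lemma asserts $\left(H-\frac{2(q+p)}{3}\right)H'>0$ on $(0,\infty)$. Multiplying by $6$ and using the formula for $C'$ above gives $C'>0$.

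There is no real obstacle in this argument. The only point needing care is that the strict inequality of Lemma~\ref{lem:H} holds on the whole open interval $(0,\infty)$. This interval is where $C$ is defined, since the endpoint $x=0$ is handled only through the limits $H(0+)$ and $H'(0+)$. It is also the set on which the corollary's claim is made, so the inequality transfers directly.
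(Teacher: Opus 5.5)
Your proof is correct and matches the paper's route. The paper states the corollary as an immediate consequence of Lemma~\ref{lem:H}, obtained by differentiating the definition \eqref{eq:C} of $C$, which is exactly your chain-rule computation followed by the two cases $q+p=0$ and $q+p\neq 0$.
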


At the end of this section, we present the final key property of $Y$.

\begin{lemma} \label{lem:log-convexity}
    For $p \le 0 < q$ and $q + p \neq 0$, $Y'$ is strictly log-convex. Namely, 
    \[
    (\log Y')'' > 0.
    \]
    Moreover,
    \[
    -(\log Y')' = -\frac{Y''}{Y'} > 0.
    \]
\end{lemma}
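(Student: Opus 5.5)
The plan is to work with $Z := Y'$ and $g := (\log Z)' = Z'/Z$, proving the log-convexity $g' > 0$ first and then deducing $g < 0$ from it.

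\textbf{Positivity of $Z$ and the ODE for $Z$.} By \eqref{eq:|H'|} we have $Z = (CY+1)/(2(q-p)) > 0$, so $\log Y'$ is defined. Differentiating \eqref{eq:Y-ode} and writing $a := 2(q-p)$ gives
\[
aZ' = C'Y + CZ, \qquad aZ'' = C''Y + 2C'Z + CZ'.
\]
Eliminating $CZ = aZ' - C'Y$ and then $Z'$ once more, I expect
\[
a^2 (Z''Z - (Z')^2) = a\bigl(C''YZ + 2C'Z^2\bigr) - (C')^2Y^2 - CC'YZ .
\]
Log-convexity is then equivalent to positivity of the right-hand side. Corollary~\ref{cor:C} gives $C' > 0$, so the term $2aC'Z^2$ helps. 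The remaining task is to control $C''$ and the two negative-looking terms.

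\textbf{Reducing the numerator to $(H,Y)$.} I will express everything through $H$ and $Y$:
\begin{itemize}[leftmargin=*]
\item $C' = 6(H - \tfrac{2(q+p)}{3})H'$, by \eqref{eq:C'};
\item $C'' = 6(H')^2 + 6(H - \tfrac{2(q+p)}{3})H''$, with $H''$ eliminated using \eqref{eq:H-ode};
\item $Z = (CY+1)/a$;
\item $H'$ from the explicit first-order formula for $H'$ in the proof of Lemma~\ref{lem:H}.
\end{itemize}
The numerator then becomes an explicit rational function of $(H, Y)$, with $Y > 0$ and $H$ on one side of $2(q+p)/3$ as given by Lemma~\ref{lem:H}.

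Two structural facts should make the sign tractable:
\begin{itemize}[leftmargin=*]
\item In \eqref{eq:H-ode}, the $Y'$-term contributes $(3H-2(q+p))^2 Y'/Y^2 > 0$ after multiplication by $(H - 2(q+p)/3)$.
\item The term $-\tfrac{1+(q-p)Y'}{2(q-p)Y}H'$ combines with the $-CC'YZ$ and $-(C')^2Y^2$ terms.
\end{itemize}
I also plan to use the bound $|H'| < Y'/(2Y^{3/2})$ from \eqref{eq:|H'|} to dominate the quadratic terms in $H'$ by the positive terms proportional to $Z^2$.

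\textbf{Main obstacle: the sign of the numerator.} Verifying that this numerator is positive on the admissible region is the hard step. If a direct algebraic sign check does not close, my first fallback is a first-zero argument, as in Lemma~\ref{lem:H}. Near $x = 0+$, expansions of $y_R, y_L$ give $Y \sim cx$ and $g$ has a definite sign limit, so $g' > 0$ initially. At a first point where $g' = 0$, I would compute $g''$ via the ODEs and show $g'' > 0$ there, contradicting that point being the first zero of $g'$.

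\textbf{Deducing $g < 0$.} Once $g$ is strictly increasing, it suffices to show that $\lim_{x\to\infty} g(x) \le 0$. As $x \to \infty$ we have $y_R' \to q$ and $y_L' \to p$, so $H \to q+p$ and hence
\[
C \to (q+p)^2/3 - (q+p)^2/3 - (q-p)^2 = -(q-p)^2 .
\]
The linear equation \eqref{eq:Y-ode} then shows that $Y \to (q-p)^{-2}$ and that $Z$ decays like $e^{-(q-p)x/2}$. Consequently $g \to -(q-p)/2 < 0$. Since $g$ is strictly increasing towards a negative limit, $g < 0$ everywhere, which is exactly $-Y''/Y' > 0$.
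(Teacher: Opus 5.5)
There is a genuine gap. Your setup is the paper's route: differentiate \eqref{eq:Y-ode} twice, then reduce with \eqref{eq:C'}, \eqref{eq:H-ode}, Lemma~\ref{lem:H} and \eqref{eq:|H'|}. Your identity for $a^2(Z''Z-(Z')^2)$ is also correct. But positivity of that numerator is the whole content of the lemma, and you only expect it; you never show it.

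The fallback is not an argument either. The sign of $g(0+)=C(0)/(2(q-p))<0$ says nothing about the sign of $g'$ near $0$. Computing $g''$ at a zero of $g'$ is no easier than the direct computation.

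Your suggested reduction also points the wrong way. Substituting the first-order formula for $H'$ and checking a rational function of $(H,Y)$ on the region ``$Y>0$, $H$ on one side of $2(q+p)/3$'' discards exactly what is needed. Lemma~\ref{lem:H} is the trajectory statement $(3H-2(q+p))H'>0$, and \eqref{eq:|H'|} is a bound on $H'$. So $H'$ must be kept as an independent quantity.

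The step that closes it is as follows. Divide your identity by $a^2Z^2$ and set $\kappa:=3H-2(q+p)$. Use $C'=2\kappa H'$ and $C''=6(H')^2+2\kappa H''$, with $H''$ taken from \eqref{eq:H-ode}. Eliminate $C$ only via $CY=aZ-1$. The terms $\pm\kappa H'/(2(q-p)^2Y')$ then cancel, and
\[
(\log Y')''=\frac{3Y(H')^2}{(q-p)Y'}+\frac{\kappa H'}{2(q-p)}+\frac{\kappa^2}{4(q-p)^2Y}\Bigl(1-\frac{4Y^3(H')^2}{(Y')^2}\Bigr).
\]
Each term is positive by Lemma~\ref{lem:H} and \eqref{eq:|H'|}. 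Your two ``structural facts'' are precisely this cancellation and the pairing of $-(C')^2Y^2$ with the $\kappa^2Y'/Y^2$ contribution. Without the identity, however, the main claim is unproved.

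Your asymptotics for $g$ are also wrong, although the conclusion survives. In $aZ'=C'Y+CZ$ the forcing term $C'Y$ is not negligible. For $p<0$ one has $y_R'-q\sim e^{-qx}$ and $y_L'-p\sim -e^{px}$. So $C'$ and $Y'$ decay like $e^{-\min(q,-p)x}$, which is slower than $e^{-(q-p)x/2}$ precisely because $q\neq -p$. Hence $g\to-\min(q,-p)$, not $-(q-p)/2$. For $p=0$ one has $y_L'\approx -1/x$, so $Y'\approx 2/(q^3x^2)$ and $g\to 0$. Your rate is correct only in the excluded case $q+p=0$, where $C$ is constant.

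What you actually need is $\lim g\le 0$, and that requires no asymptotics. If the strictly increasing $g$ had a positive limit, $Y'$ would grow exponentially. This contradicts $0<Y<(q-p)^{-2}$, which follows from $W'>q-p$; this is the paper's argument. For $p=0$ the limit is exactly $0$, so the strict inequality $-Y''/Y'>0$ rests on the strict monotonicity from the first part.
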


\begin{proof}
    Differentiating \eqref{eq:C'}, we have
    \begin{equation} \label{eq:C''}
        C'' = 6(H')^2 + (6H - 4(q + p))H''.
    \end{equation}
    Differentiating \eqref{eq:Y-ode}, we have
    \begin{equation}
        Y'' = \frac{C'Y + CY'}{2(q - p)}.
    \end{equation}
    Differentiating again, we obtain
    \begin{equation}
        Y''' = \frac{C''Y + 2C'Y' + CY''}{2(q - p)}.
    \end{equation}
    Note that
    \[
    (\log Y')'' = \frac{Y'''}{Y'} - \left( \frac{Y''}{Y'} \right)^2.
    \]
    Using \eqref{eq:C}, \eqref{eq:H-ode}, \eqref{eq:C'}, \eqref{eq:C''} and eliminating $H'', C, C', C''$, we obtain
    \[
    (\log Y')'' = \frac{3Y(H')^2}{(q - p)Y'} + \frac{(3H - 2(q + p))H'}{2(q - p)} + \frac{(3H - 2(q + p))^2}{4(q - p)^2Y} \left( 1 - \frac{4Y^3(H')^2}{(Y')^2} \right).
    \]
    By \eqref{eq:|H'|} and lemma~\ref{lem:H}, we have the three terms are strictly positive. This gives the log-convexity. 
    
    Moreover, $-Y''/Y'$ is strictly decreasing. Suppose $-Y''/Y' \le 0$ at some point. Then there exist $x_0 > 0$ and $c > 0$ such that
    \[
    -\frac{Y''}{Y'} \le -c, \quad x \ge x_0.
    \]
    Thus $Y'$ grows at least exponentially, and consequently $Y(x) \to \infty$ as $x \to \infty$. However, one has $W' > q - p > 0$. Then
    \[
    0 < Y(x) < \frac{1}{(q - p)^2}.
    \]
    This is a contradiction.
\end{proof}

\section{Monotonicity} \label{sec:3}

\subsection{The increasing regions} \label{sec:3.1}

We will identify the region in the $(p, q)$-plane where $\Theta$ is strictly increasing with respect to $E$, or equivalently, with respect to $r$, in view of \eqref{eq:W(E)}. Before proceeding, we establish a lemma. We will subsequently show that the region where the following function is strictly positive coincides with the region where $\Theta'(E) > 0$.

\begin{lemma} \label{lem:F}
    For $p < 0 < q$, and set
    \[
    \omega := -\frac{pq}{q - p} > 0, \quad \text{or equivalently,} \quad \frac{1}{\omega} = \frac{1}{-p} + \frac{1}{q}.
    \]
    Define
    \[
    F(t) := \frac{1}{e^t + 1} + \frac{1}{e^{-pt} - 1} + \frac{1}{e^{qt} - 1} - \frac{1}{e^{\omega t} - 1}, \quad t > 0.
    \]
    Then
    \[
    F(t) \ge 0, \quad t > 0 \quad \text{if and only if} \quad \omega \ge 1.
    \]
    Moreover, if $\omega \ge 1$, then $F(t) > 0$ for every $t > 0$, except at $(p, q) = (-2, 2)$, where $F \equiv 0$.
\end{lemma}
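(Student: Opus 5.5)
The plan is to rewrite $F$ as a second-difference expression of a single convex function, and then use elementary convexity inequalities. Write $a := -p > 0$ and $b := q > 0$, so that $\frac1\omega = \frac1a + \frac1b$. For fixed $t > 0$, set
\[
h(x) := \frac{1}{e^{t/x} - 1}, \quad x > 0, \qquad h(0+) = 0.
\]
Put $x := 1/a$ and $y := 1/b$, so that $x + y = 1/\omega$. The identity $\frac{1}{e^t - 1} - \frac{2}{e^{2t} - 1} = \frac{1}{e^t + 1}$ gives $\frac{1}{e^t + 1} = h(1) - 2h(\tfrac12)$. Define $D(x, y) := h(x + y) - h(x) - h(y)$. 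Then
\[
F(t) = D\left(\tfrac12, \tfrac12\right) - D(x, y).
\]
So $F \ge 0$ is equivalent to $D(x, y) \le D(\tfrac12, \tfrac12)$.

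\textbf{Convexity of $h$.} The key analytic input is that $h$ is strictly convex on $(0, \infty)$. By scaling it suffices to take $t = 1$. Substituting $u = 1/x$ and $g(u) = (e^u - 1)^{-1}$, we have $\frac{d}{dx} = -u^2 \frac{d}{du}$, hence
\[
h''(x) = u^2 \frac{d}{du}\left( u^2 g'(u) \right).
\]
Here $u^2 g'(u) = -\left( \frac{u/2}{\sinh(u/2)} \right)^2$. Since $s/\sinh s$ is strictly decreasing on $(0, \infty)$, this quantity is strictly increasing in $u$, so $h'' > 0$. I expect this computation to be the only step needing care, and it is short.

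\textbf{Sufficiency ($\omega \ge 1$).} In this case $x + y = 1/\omega \le 1$. Let $s = x + y$.
\begin{enumerate}
    \item With $s$ fixed, $h(x) + h(s - x)$ is minimized exactly at $x = s/2$, by strict convexity. Hence $D(x, y) \le D(s/2, s/2)$, with equality only if $x = y$.
    \item The map $\sigma \mapsto D(\sigma, \sigma) = h(2\sigma) - 2h(\sigma)$ has derivative $2\bigl(h'(2\sigma) - h'(\sigma)\bigr) > 0$. So it is strictly increasing, and $D(s/2, s/2) \le D(\tfrac12, \tfrac12)$, with equality only if $s = 1$.
\end{enumerate}
Therefore $F(t) \ge 0$ for all $t > 0$. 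Equality at some $t$ forces $x = y = \tfrac12$, i.e.\ $a = b = 2$, i.e.\ $(p, q) = (-2, 2)$. At that point a direct check gives $F \equiv 0$: indeed $\frac{1}{e^t + 1} + \frac{2}{e^{2t} - 1} - \frac{1}{e^t - 1} = 0$. Everywhere else with $\omega \ge 1$, strictness in steps 1 and 2 gives $F > 0$ for every $t > 0$.

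\textbf{Necessity ($\omega < 1$).} Since $\omega < \min(a, b)$, the behavior as $t \to \infty$ is
\[
F(t) = e^{-t} + e^{-at} + e^{-bt} - e^{-\omega t} + O\bigl(e^{-2\min(1, \omega) t}\bigr).
\]
If $\omega < 1$, the term $-e^{-\omega t}$ strictly dominates all the others. Hence $F(t) < 0$ for large $t$, which proves the converse.
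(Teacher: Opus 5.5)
Your proof is correct and is essentially the paper's. Your splitting $F = \bigl[D(\tfrac12,\tfrac12) - D(\tfrac s2,\tfrac s2)\bigr] + \bigl[D(\tfrac s2,\tfrac s2) - D(x,y)\bigr]$ reproduces exactly the paper's $\mathrm{I}(t) + \mathrm{II}(t)$. Since $D(\sigma,\sigma) = 1/(e^{t/(2\sigma)}+1)$, your step 2 is the obvious monotonicity behind $\mathrm{I}\ge 0$, and your step 1 is the paper's Jensen inequality for the same convex function $r\mapsto 1/(e^{\omega t/r}-1)$. The necessity argument via $e^{\omega t}F(t)\to -1$ is also identical; your only addition is the explicit convexity check through $u^2 g'(u) = -\bigl(\frac{u/2}{\sinh(u/2)}\bigr)^2$, which the paper leaves as a direct computation.
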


\begin{proof}
    Rewrite
    \[
    F(t) = \left( \frac{1}{e^t + 1} - \frac{1}{e^{\omega t} + 1} \right) + \left( \frac{1}{e^{-pt} - 1} + \frac{1}{e^{qt} - 1} - \frac{2}{e^{2 \omega t} - 1} \right) =: \mathrm{I}(t) + \mathrm{II}(t).
    \]
    If $\omega \ge 1$, clearly $\mathrm{I}(t) \ge 0$ and equality holds iff $\omega = 1$. Define
    \[
    f(r) = \frac{1}{e^\frac{\omega t}{r} - 1}, \quad r > 0.
    \]
    A direct computation gives $f''(r) > 0$ and then $f$ is strictly convex. By Jensen's inequality
    \[
    \frac{1}{e^{-pt} - 1} + \frac{1}{e^{qt} - 1} = f \left( \frac{q}{q - p} \right) + f \left( \frac{-p}{q - p} \right) \ge 2 f \left( \frac{1}{2} \right) = \frac{2}{e^{2 \omega t} - 1},
    \]
    which means $\mathrm{II}(t) \ge 0$ and equality holds iff $q = -p$. Consequently, $F(t) > 0$ unless $(p, q) = (-2, 2)$.

    Conversely, if $\omega < 1$, one has $\omega < \min \{ -p, q, 1 \}$. Hence $e^{\omega t}F(t) \to -1$ as $t \to \infty$. Then $F(t) < 0$ for $t > 0$ sufficiently large.
\end{proof}

We now state the theorem characterizing the strictly increasing region. Figures~\ref{fig:LW-increasing} and \ref{fig:increasing} illustrate a comparison between the region established by Li--Wan\cite{LW26} and the region obtained in our work.

\begin{theorem}
    Suppose $q > p$ and $(p, q) \neq (1, 2), (-2, -1), (-2, 2)$. Then $\Theta(p, q, r)$ is strictly increasing in $r$ if and only if
    \[
    p < 0 < q \text{ and } \frac{1}{-p} + \frac{1}{q} \le 1 \quad \text{or} \quad p \ge 1 \quad \text{ or } \quad q \le -1.
    \]
\end{theorem}

\begin{figure}[htbp]
    \centering
    \begin{subfigure}[b]{0.48\textwidth}
        \centering
        \includegraphics[width=\linewidth]{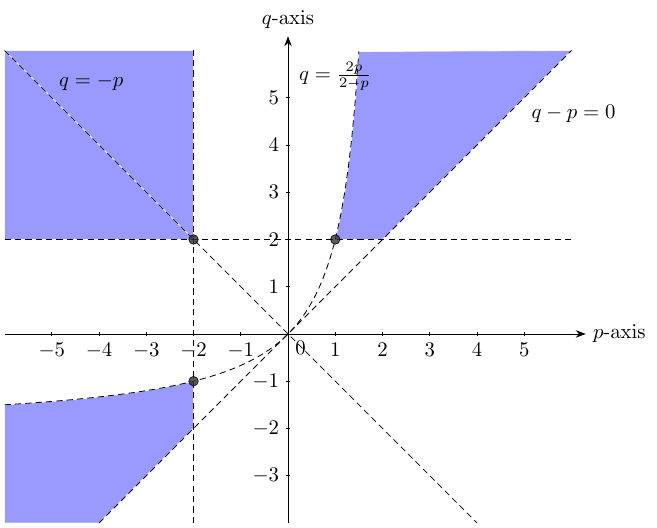}
        \caption{The increasing region (blue) in Li--Wan.}
        \label{fig:LW-increasing}
    \end{subfigure}
    \hfill
    \begin{subfigure}[b]{0.48\textwidth}
        \centering
        \includegraphics[width=\linewidth]{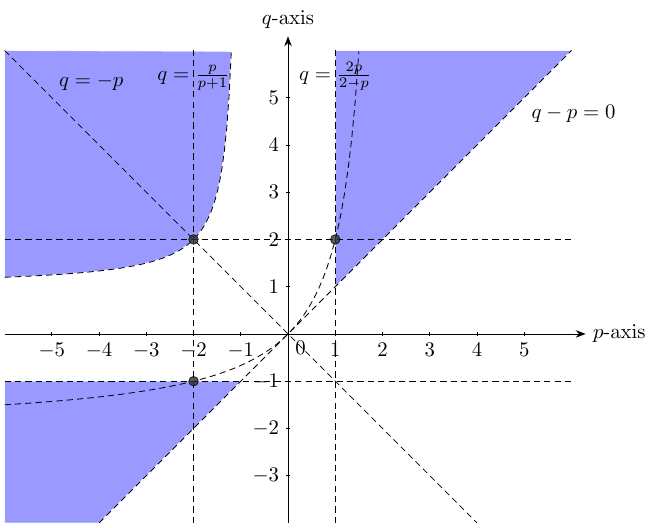}
        \caption{The increasing region (blue) in our work.}
        \label{fig:increasing}
    \end{subfigure}
    \caption{Comparison between the strictly increasing regions}
\end{figure}

\begin{proof}
    We first prove the sufficiency. By duality~\eqref{eq:dual1} and \eqref{eq:dual2}, it suffices to consider $p \le 0 < q$ and $\omega \ge 1$. We first assume $p < 0 < q$. For a function $f$ defined on $(0, \infty)$, write
    \[
    \hat{f}(s) := \int_{0}^{\infty} e^{-st}f(t) \dd t, \quad s > 0. 
    \]
    The Laplace transform of $K$ and $W'$ are well-defined for $s > 0$. From the convolution \eqref{eq:Theta(E)},
    \[
    \hat{\Theta}(s) = \frac{1}{q - p} \hat{K}(s) \widehat{W'}(s).
    \]
    We directly compute
    \[
    \hat{K}(s) = \int_{0}^{\infty} \frac{e^{-st}}{\sqrt{e^{2t} - 1}} \dd t = \frac{1}{2} B \left( \frac{s + 1}{2}, \frac{1}{2} \right),
    \]
    and
    \[
    \widehat{W'}(s) = \int_{-\infty}^{\infty} e^{-sU(y)} \dd y = \int_{-\infty}^{\infty} e^{\frac{y}{-p}s} \left( \frac{qe^y - p}{q - p} \right)^{\frac{q - p}{qp}s} \dd y = C^s B \left( \frac{s}{-p}, \frac{s}{q} \right),
    \]
    where $C = C(p, q) > 0$ is independent of $s$. Using $B(a, b) = \Gamma(a) \Gamma(b) / \Gamma(a + b)$, we obtain
    \begin{equation} \label{eq:sTheta}
        s\hat{\Theta}(s) = \frac{\sqrt{\pi}}{q - p} C^s \frac{\Gamma \left( \frac{s + 1}{2} \right) \Gamma \left( \frac{s}{-p} \right) \Gamma \left( \frac{s}{q} \right)}{\Gamma \left( \frac{s}{2} \right) \Gamma \left( \frac{s}{\omega} \right)}.
    \end{equation}
    We next recover the sign of $\Theta'$ from this formula. Recall the standard identity
    \[
    (\log \Gamma)''(z) = \int_{0}^{\infty} \frac{te^{-zt}}{1 - e^{-t}} \dd t, \quad z > 0.
    \]
    Differentiating the logarithm of \eqref{eq:sTheta} twice gives
    \[
    \frac{\dd^2}{\dd s^2} \log(s \hat{\Theta}(s)) = \int_{0}^{\infty} te^{-st} F(t) \dd t,
    \]
    where $F$ is defined as in lemma~\ref{lem:F}. Set $g(t) := F(t)/t$. The behavior of $F$ at $0$ and at infinity shows that $g$ is bounded and integrable. Since
    \[
    \hat{g}''(s) = \int_{0}^{\infty} te^{-st} F(t) \dd t,
    \]
    we have $\log(s\hat{\Theta}(s)) - \hat{g}(s)$ is affine in $s$. Note that $s\hat{\Theta}(s) \to \Theta(0) = \pi/\sqrt{q - p}$ as $s \to \infty$, while $\hat{g}(s) \to 0$. Hence the affine function above is constant, and therefore
    \[
    s\hat{\Theta}(s) = \frac{\pi}{\sqrt{q - p}} e^{\hat{g}(s)} = \frac{\pi}{\sqrt{q - p}} \left( 1 + \sum_{n = 1}^{\infty} \frac{\hat{g}^n(s)}{n!} \right).
    \]
    Since $g \in L^1 \cap L^\infty$, the resulting series converges absolutely. Thus
    \begin{equation} \label{eq:Theta'(E)}
        \Theta'(E) = \frac{\pi}{\sqrt{q - p}} \sum_{n = 1}^{\infty} \frac{g^{\ast n}(E)}{n!}.
    \end{equation}
    If $\omega \ge 1$, lemma~\ref{lem:F} gives $F(t) > 0$ for every $t > 0$, except at $(p, q) = (-2, 2)$. Hence $g > 0$ and every term in \eqref{eq:Theta'(E)} is positive. Therefore
    \[
    \Theta'(E) > 0, \quad E > 0.
    \]

    For necessity, we first consider $p < 0 < q$. If $\omega < 1$, using $\Gamma(z + 1) = z\Gamma(z)$, formula \eqref{eq:sTheta} can be rewritten as
    \[
    G(s) := s\hat{\Theta}(s) = \frac{\sqrt{\pi}}{2} C^s \frac{\Gamma \left( \frac{s + 1}{2} \right) \Gamma \left( 1 + \frac{s}{-p} \right) \Gamma \left( 1 + \frac{s}{q} \right)}{\Gamma \left( 1 + \frac{s}{2} \right) \Gamma \left( 1 + \frac{s}{\omega} \right)}.
    \]
    Although $G$ was initially defined only for $s > 0$, the right-hand side defines an analytic continuation of $G$ to $|s| < \min \{ 1, -p, q \}$. Since $\omega < \min \{ 1, -p, q \}$, at $s = -\omega$,
    \[
    \frac{1}{\Gamma(1 + s/\omega)} = \frac{1}{\Gamma(0)} = 0,
    \]
    while all other factors are finite and nonzero. Hence $G(-\omega) = 0$. On the other hand, since $\Theta' \ge 0$ and both endpoint limits of $\Theta$ are finite, $\Theta' \in L^1(0, \infty)$. For $s > 0$, integration by parts gives
    \[
    G(s) = \Theta(0+) + \int_{0}^{\infty} e^{-st} \Theta'(t) \dd t. 
    \]
    Consequently, for every $n \ge 1$,
    \[
    (-1)^n G^{(n)}(s) = \int_{0}^{\infty} t^n e^{-st} \Theta'(t) \dd t.
    \]
    Letting $s \to 0+$, monotone convergence together with the analyticity of $G$ at $0$ yields $(-1)^n G^{(n)}(0) > 0$. Since the Taylor series converges at $s = -\omega$,
    \[
    G(-\omega) = \sum_{n = 0}^{\infty} \frac{G^{(n)}(0)}{n!} (-\omega)^n = G(0) + \sum_{n = 1}^{\infty} \frac{(-1)^n G^{(n)}(0)}{n!} \omega^n > \Theta(\infty) = \frac{\pi}{2}. 
    \]
    This is a contradiction, therefore $\omega \ge 1$.
    
    It remains to exclude $p = 0 < q$. Recall that in this case
    \[
    U(y) = \frac{y + e^{-y} - 1}{q}.
    \]
    We directly compute
    \[
    \widehat{W'}(s) = \int_{-\infty}^{\infty} e^{-sU(y)} \dd y = e^\frac{s}{q} \left( \frac{q}{s} \right)^\frac{s}{q} \Gamma \left( \frac{s}{q} \right),
    \]
    and then
    \[
    G(s) := s\hat{\Theta}(s) =  \frac{\sqrt{\pi}}{2} \frac{\Gamma \left( \frac{s + 1}{2} \right) \Gamma \left( 1 + \frac{s}{q} \right)}{\Gamma \left( 1 + \frac{s}{2} \right)} e^\frac{s}{q} \left( \frac{q}{s} \right)^\frac{s}{q}.
    \]
    Note that all factors in the right-hand side, except $(q/s)^{s/q}$, are analytic and nonzero at $s = 0$. Hence
    \[
    \log \frac{G(s)}{\pi/2}  = -\frac{s}{q} \log s + O(s).
    \]
    It follows that $G(s) > \pi/2$ for $s > 0$ sufficiently small. On the other hand, since $\Theta(\infty) = \pi/2$, then $\Theta(E) < \pi/2$ for every $E > 0$. However, for every $s > 0$,
    \[
    G(s) = \int_{0}^{\infty} se^{-st}\Theta(t) \dd t < \frac{\pi}{2} \int_{0}^{\infty} se^{-st} \dd t = \frac{\pi}{2}.
    \]
    This is a contradiction, therefore $\Theta$ is not strictly increasing.
\end{proof}

\subsection{The decreasing regions} \label{sec:3.2}
Next we identify the strictly decreasing region in the $(p, q)$-plane. Figures~\ref{fig:LW-decreasing} and \ref{fig:decreasing} compare the strictly decreasing region established by Li--Wan\cite{LW26} with that obtained in our result.

\begin{theorem}
    Suppose $q > p$ and $(p, q) \neq (1, 2), (-2, -1), (-2, 2)$. Then $\Theta(p, q, r)$ is strictly decreasing in $r$ if and only if
    \[
    p^2 - pq + q^2 + 3p - 3q \le 0.
    \]
\end{theorem}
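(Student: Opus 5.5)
The plan is to prove necessity by a small-$E$ expansion and sufficiency by a comparison argument on the convolution \eqref{eq:Theta(E)}. By \eqref{eq:dual1}--\eqref{eq:dual3} it suffices to work in $p \le 0 < q$. The first check is that the quantity $Q(p,q) := p^2 - pq + q^2 + 3p - 3q$ is invariant, up to a positive factor, under the duality maps. Invariance under \eqref{eq:dual3} is immediate. For \eqref{eq:dual1}--\eqref{eq:dual2} I would verify it by direct substitution, expecting a factor such as $(q/(q-p))^2$. Since these dualities relate $r$ to a positive power of $r$, this verification is enough to transport the region.

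The key observation is that, writing $C(0+) = -\frac{(q+p)^2}{3} - (q-p)^2 = -\frac43 (p^2 - pq + q^2)$, the condition $Q \le 0$ reads
\[
-C(0+) \le 4(q-p).
\]
So the region is cut out by the value of $C$ at the bottom of the well $U$. This is also the coefficient governing the second-order expansion of $\Theta$ near $r = 1$.

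\textbf{Necessity.} I will expand $Y$ near $x = 0$. From \eqref{eq:Y-ode}, $Y(0+) = 0$ and $Y'(0+) = 1/(2(q-p))$. The expansion
\[
Y = \frac{x}{2(q-p)} + \frac{C(0+)\,x^2}{8(q-p)^2} + O(x^3)
\]
then gives
\[
W' = Y^{-1/2} = \sqrt{2(q-p)}\,x^{-1/2}\Bigl(1 - \frac{C(0+)}{8(q-p)}x + O(x^2)\Bigr).
\]
Inserting this into \eqref{eq:Theta(E)}, the leading term reproduces $\pi/\sqrt{q-p}$, in agreement with \eqref{eq:limit1}. The first correction is a constant times
\[
\Bigl(-\frac{C(0+)}{8(q-p)} - \frac12\Bigr)E,
\]
where the $-\tfrac12$ comes from $K(t) = (2t)^{-1/2}(1 - t/2 + \dots)$. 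Hence $\Theta'(0+)$ has the sign of $-C(0+) - 4(q-p)$, which is the sign of $Q$. If $Q > 0$, then $\Theta$ increases near $E = 0$, so it is not strictly decreasing.

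\textbf{Sufficiency.} Assume $Q \le 0$. I would substitute $x = Es$ to write
\[
(q-p)\Theta(E) = \int_0^1 E\,K(E(1-s))\,W'(Es)\,ds,
\]
and set $\Phi(x) := x^{1/2}W'(x) = (x/Y)^{1/2}$. This gives
\[
(q-p)\Theta(E) = \int_0^1 \bigl[E^{1/2}K(E(1-s))\bigr]\, s^{-1/2}\,\Phi(Es)\,ds.
\]
The bracket is strictly decreasing in $E$, because $t \mapsto t^{1/2}K(t) = (t/(e^{2t}-1))^{1/2}$ is decreasing. The factor $\Phi$ is increasing, since $Y$ is concave by Lemma~\ref{lem:log-convexity} and hence $Y/x$ decreases. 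The task is therefore to show that the decay of the kernel beats the growth of $\Phi$. I would aim for the pointwise inequality
\[
\frac{x\,\Phi'(x)}{\Phi(x)} \le \frac{-t\,(t^{1/2}K)'(t)}{t^{1/2}K(t)}\bigg|_{t = x} = \frac{t e^{2t}}{e^{2t}-1} - \frac12 \quad (\text{at } t = x),
\]
and then integrate it against the positive weight. The point is that $Es \le E$, the left side is controlled uniformly in $x$, and the right side is increasing in $t$.

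The left side equals $\frac12\bigl(1 - xY'/Y\bigr)$. At $x = 0$ both sides vanish, and their first-order terms compare exactly as $-C(0+)$ versus $4(q-p)$. For larger $x$, I would use Corollary~\ref{cor:C} ($C$ increasing) together with the linear ODE \eqref{eq:Y-ode}. The aim is to bound $1 - xY'/Y$ above by its value for the constant-coefficient comparison problem with $C \equiv C(0+)$. That problem is explicitly solvable: $Y$ is an exponential combination, and the inequality reduces to an elementary one in which $C(0+) \ge -4(q-p)$ is exactly what is needed. The case $q + p = 0$ has $C$ constant and is handled directly. The boundary $Q = 0$ needs a strictness argument, which I would get from strict monotonicity of $C$ when $q + p \ne 0$.

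\textbf{Main obstacle.} I expect the main obstacle to be this global comparison for $xY'/Y$. The local expansion only controls small $x$, and Lemma~\ref{lem:log-convexity} alone only gives $Y'' < 0$. If the Sturm-type comparison with the frozen coefficient $C(0+)$ fails, I would instead try to prove $\Theta' < 0$ via the differential identity approach of the next subsection. That would mean differentiating under the integral to express $\Theta'$ in terms of $Y''$, and using $(\log Y')'' > 0$ to show that $\Theta'$ can change sign at most once. Combined with $\Theta'(0+) \le 0$ and the limit $\Theta(\infty) < \Theta(0+)$ from Lemma~\ref{lem:limit}, this would exclude any sign change.
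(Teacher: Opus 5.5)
Your duality reduction is sound: under \eqref{eq:dual1}, \eqref{eq:dual2} and \eqref{eq:dual3}, $Q:=p^2-pq+q^2+3p-3q$ is multiplied by $(q/(q-p))^2$, $(p/(q-p))^2$ and $1$ respectively. Your necessity argument is also correct. It reproduces the Li--Wan coefficient, since $(r-1)^2\sim 8E/(q-p)$.

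\textbf{Gap in sufficiency.} Write $\sigma:=x\Phi'/\Phi$ and $\rho(t):=\frac{te^{2t}}{e^{2t}-1}-\frac12$.
\begin{itemize}
\item In your scaling $x=Es$, the quantity $E\,\partial_E$ of the logarithm of the integrand equals $\sigma(Es)-\rho(E(1-s))$.
\item As $s\to1$, the kernel contribution $\rho(E(1-s))$ tends to $0$, while $\sigma(Es)\to\sigma(E)>0$. So there is no pointwise-in-$s$ monotonicity, and nothing converts $\sigma\le\rho$ into $\Theta'<0$.
\item The target inequality is too weak in principle. Take $W'(x)=c\sqrt{e^{2x}-1}/x$. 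Then $\Phi(x)\,x^{1/2}K(x)\equiv c$, so $\sigma\equiv\rho$, and $\Phi$ is increasing. Yet $K\ast W'$ is strictly increasing in $E$ (see below).
\end{itemize}
So no argument that uses only $\sigma\le\rho$ and the monotonicity of $\Phi$ can conclude, whichever comparison you use to prove $\sigma\le\rho$.

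\textbf{Missing idea.} The paper substitutes $z=(e^{2x}-1)/(e^{2E}-1)$, which removes $E$ from the kernel altogether:
\[
\Theta(E)=\frac{1}{2(q-p)}\int_0^1 M\Bigl(\tfrac12\log\bigl(1+(e^{2E}-1)z\bigr)\Bigr)\frac{\dd z}{\sqrt{z(1-z)}},\qquad M(x):=W'(x)\sqrt{1-e^{-2x}}.
\]
Hence it suffices that $M$ be decreasing, i.e.\ $Y'/Y>2/(e^{2x}-1)$. This is strictly stronger than your target; in the example above $M=2c\sinh x/x$ is increasing, which is why $\Theta$ increases there.

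The paper proves this by freezing $C$ at its current value $C(x)$, not at $C(0+)$:
\begin{itemize}
\item Since $C$ increases, $\int_t^x C<C(x)(x-t)$ in the variation-of-constants formula for $Y$.
\item This gives $Y'/Y>a/(e^{ax}-1)$ with $0<a=-C(x)/(2(q-p))<-C(0+)/(2(q-p))\le 2$.
\item The map $a\mapsto a/(e^{ax}-1)$ is decreasing, so $Y'/Y>2/(e^{2x}-1)$. The strict inequality $a<2$ (for $q+p\neq0$) also covers $Q=0$.
\end{itemize}
A Sturm comparison with $C\equiv C(0+)$ only yields $Y\ge Y_0$, which bounds $1/Y$ in the wrong direction.

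\textbf{Fallback.} Your fallback also falls short as stated. ``At most one sign change'' of $\Theta'$, together with $\Theta'(0+)<0$ and $\Theta(\infty)<\Theta(0+)$, does not exclude a $-\to+$ change. You need the direction from Theorem~\ref{thm:nonmonotone}: every critical point is a nondegenerate maximum. Even with that, at $Q=0$ your first-order coefficient vanishes. You would then additionally need the sign of $D$ near $0$, which requires a higher-order expansion.
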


\begin{figure}[htbp]
    \centering
    \begin{subfigure}[b]{0.48\textwidth}
        \centering
        \includegraphics[width=\linewidth]{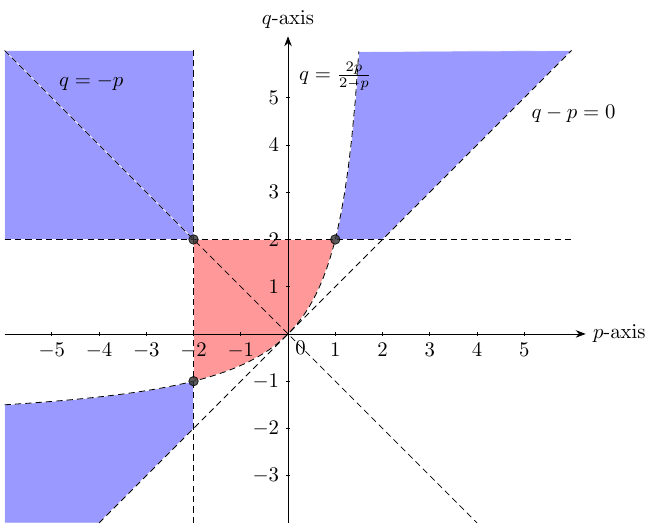}
        \caption{The decreasing region (red) in Li--Wan.}
        \label{fig:LW-decreasing}
    \end{subfigure}
    \hfill
    \begin{subfigure}[b]{0.48\textwidth}
        \centering
        \includegraphics[width=\linewidth]{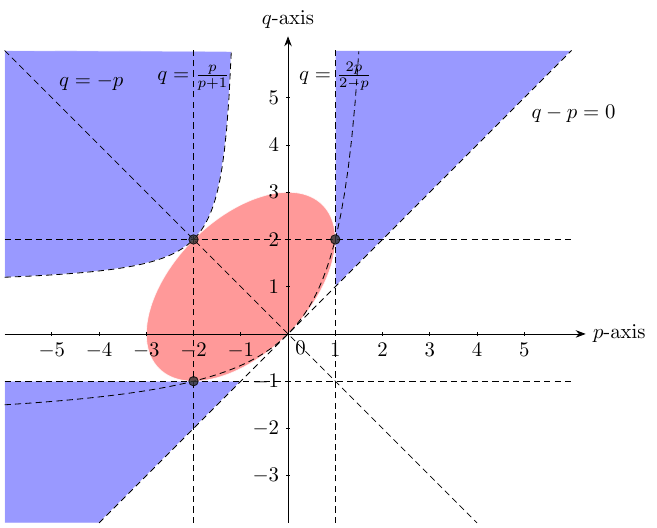}
        \caption{The decreasing region (red) in our work.}
        \label{fig:decreasing}
    \end{subfigure}
    \caption{Comparison between the strictly decreasing regions}
\end{figure}

\begin{proof}
    We first prove the sufficiency. By duality~\eqref{eq:dual1} and \eqref{eq:dual2}, it suffices to consider $p \le 0 < q$. First assume that $p + q \neq 0$. Noting that $H(x) \to q + p$, by corollary~\ref{cor:C}, we have
    \[
    -(q - p)^2 - \frac{(q + p)^2}{3} = C(0) < C(x) < -(q - p)^2 < 0, \quad x > 0.
    \]
    Solving the equation~\eqref{eq:Y-ode}, we obtain
    \begin{align*}
        Y(x) & = \frac{1}{2(q - p)} \int_{0}^{x} \exp \left\{ \frac{1}{2(q - p)} \int_{t}^{x} C(s) \dd s \right\} \dd t \\
        & < \frac{1}{2(q - p)} \int_{0}^{x} \exp \left\{ \frac{C(x)(x - t)}{2(q - p)} \right\} \dd t \\
        & = \frac{e^\frac{C(x)x}{2(q - p)} - 1}{C(x)}.
    \end{align*}
    Since $p^2 - pq + q^2 + 3p - 3q \le 0$, we have
    \[
    0 < -\frac{C(x)}{2(q - p)} < -\frac{C(0)}{2(q - p)} \le 2.
    \]
    Note that for fixed $x > 0$, the function $r \mapsto r/(e^{rx} - 1)$ is strictly decreasing on $(0, \infty)$. Using \eqref{eq:Y-ode} again yields
    \begin{equation} \label{eq:Y'/Y}
        \frac{Y'(x)}{Y(x)} > \frac{-\frac{C(x)}{2(q - p)}}{e^{-\frac{C(x)}{2(q - p)}x} - 1} > \frac{2}{e^{2x} - 1}.
    \end{equation}
    Define
    \[
    M(x) := W'(x) \sqrt{1 - e^{-2x}} = \sqrt{\frac{1 - e^{-2x}}{Y(x)}} > 0 \quad x > 0.
    \]
    Then by \eqref{eq:Y'/Y}
    \begin{equation} \label{eq:logM}
        (\log M)' = \frac{1}{e^{2x} - 1} - \frac{Y'}{2Y} < 0.
    \end{equation}
    Thus $M$ is strictly decreasing. Recall the convolution representation \eqref{eq:Theta(E)}, let
    \[
    z := \frac{e^{2x} - 1}{e^{2E} - 1} \in (0, 1), \quad \text{i.e.} \quad x = \frac{1}{2} \log(1 + (e^{2E} - 1)z),
    \]
    then
    \begin{equation} \label{eq:new Theta(E)}
        \Theta(E) = \frac{1}{2(q - p)} \int_{0}^{1} \frac{M(\frac{1}{2} \log(1 + (e^{2E} - 1)z))}{\sqrt{z(1 - z)}} \dd z.
    \end{equation}
    For every $z \in (0, 1)$, $\log(1 + (e^{2E} - 1)z)$ is strictly increasing in $E$. It follows that $\Theta'(E) < 0$.

    It remains to consider $q + p = 0$. In this case $H \equiv 0$ and $C \equiv C(0)$. The equation~\eqref{eq:Y-ode} gives explicitly
    \[
    Y(x) = \frac{1 - e^{-\frac{q - p}{2}x}}{(q - p)^2}.
    \]
    Consequently,
    \[
    M(x) = (q - p) \sqrt{\frac{1 - e^{-2x}}{1 - e^{-\frac{q - p}{2}x}}},
    \]
    and
    \[
    (\log M)' = \frac{1}{e^{2x} - 1} - \frac{\frac{q - p}{4}}{e^{\frac{q - p}{2}x} - 1}.
    \]
    The elliptic condition reduces to $q - p \le 4$. It follows that $M' < 0$ whenever $q - p < 4$. The equality $q - p = 4$ corresponds to $(p, q) = (-2, 2)$, which has been excluded.
    
    For necessity, we use the expansion of Li--Wan\cite[Corollary~4.3]{LW26}:
    \[
    \Theta(p, q, r) = \frac{\pi}{\sqrt{q - p}} + \frac{\pi(p^2 - pq + q^2 + 3p - 3q)}{96\sqrt{q - p}} (r - 1)^2 + o((r - 1)^2).
    \]
    If $p^2 - pq + q^2 + 3p - 3q > 0$, then $\Theta(p, q, r)$ will increase for $r > 1$ sufficiently close to $1$.
\end{proof}

\subsection{The non-monotonic regions} \label{sec:3.3}

In this subsection, we prove that for the region $q > p$, $\Theta(p, q, r)$ has at most one critical point with respect to $r$. Moreover, if such a critical point exists, it must be a strict local maximum. Combining this with the regions of monotonicity established earlier, it follows that $\Theta(p, q, r)$ has exactly one critical point, which is a strict maximum. Figures~\ref{fig:LW-nonmonotone} and \ref{fig:nonmonotone} illustrate the regions lacking monotonicity for which Li--Wan\cite{LW26} were able to determine the exact number of solutions.

\begin{theorem} \label{thm:nonmonotone}
    Suppose $q > p$, and $(p, q) \neq (1, 2), (-2, -1), (-2, 2)$. Then $\Theta(p, q, r)$ has at most one critical point on $r > 1$. Moreover, every critical point, if it exists, is a nondegenerate local maximum.
\end{theorem}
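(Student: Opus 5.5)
By the duality relations \eqref{eq:dual1} and \eqref{eq:dual2}, I reduce to $p \le 0 < q$. These maps rescale $\Theta$ by a positive constant and $r$ by a positive power, so they preserve both the number and the type of critical points. By \eqref{eq:W(E)}, $E$ is a strictly increasing smooth function of $r$, so it suffices to show that $\Theta(E)$ has at most one critical point on $(0,\infty)$ and that any such point is a nondegenerate maximum.

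The case $q+p=0$ is explicit. Here $H\equiv 0$, $C$ is constant, and $(\log M)'$ was computed in the previous subsection. For $q-p\neq 4$ one checks directly that $M$ is monotone or single-crossing, and the argument below covers it. So the main case is $q+p\neq 0$.

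I would work with the representation \eqref{eq:new Theta(E)}:
\[
\Theta(E)=\frac{1}{2(q-p)}\int_0^1 \frac{M(x_z(E))}{\sqrt{z(1-z)}}\dd z,\qquad x_z(E)=\tfrac12\log(1+(e^{2E}-1)z).
\]
Differentiating gives
\[
\Theta'(E)=\frac{1}{2(q-p)}\int_0^1 \frac{M'(x_z)\,\partial_E x_z}{\sqrt{z(1-z)}}\dd z .
\]
The first goal is a sign structure for $M'$. Recall
\[
(\log M)'=\frac{1}{e^{2x}-1}-\frac{Y'}{2Y}.
\]
I would show that $\phi(x):=(e^{2x}-1)\,Y'(x)/(2Y(x))$ crosses the level $1$ at most once, and only from above to below. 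Equivalently, $M$ is first decreasing and then possibly increasing; in other words, $\log M$ has a single-well shape. The tool for this is Lemma~\ref{lem:log-convexity}: $Y'$ is log-convex and $-Y''/Y'>0$ is decreasing. Together with the linear ODE \eqref{eq:Y-ode} and the monotonicity of $C$ from Corollary~\ref{cor:C}, these should show that at any zero of $(\log M)'$ the derivative $(\log M)''$ is positive. That gives at most one sign change of $M'$, from negative to positive, at some point $x^*$.

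With this sign structure I then analyse $\Theta'$. I would write $\Theta'(E)=(K\ast (W')')(E)/(q-p)$ plus a boundary term, or alternatively use the kernel identity above. The weight $\partial_E x_z$ is increasing in $x$ through the substitution, and this should yield a variation-diminishing property: $\Theta'$ changes sign at most once. The kernel $K(E-x)$ is totally positive of order $2$ in $(E,x)$, since $\log K$ is concave in its argument. So a function with one sign change, composed with such a kernel, keeps at most one sign change.

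Near $E=0$ the sign of $\Theta'$ is governed by the Li--Wan expansion. In the nonmonotone region, where $p^2-pq+q^2+3p-3q>0$, $\Theta$ increases initially. Hence the unique sign change of $\Theta'$ must be from positive to negative, so any critical point is a local maximum.

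The main obstacle is \emph{nondegeneracy}, i.e.\ $\Theta''(E_0)<0$ at a critical point $E_0$, rather than merely a sign change. For this I would derive a differential identity at $E_0$. Using $\Theta'(E_0)=0$ to subtract a suitable multiple $\lambda\,\Theta'(E_0)$ from $\Theta''(E_0)$, the integrand should become a product of $M'$-type factors with a strictly monotone kernel ratio. Strictness then follows from the strict inequalities in Lemma~\ref{lem:log-convexity} and Lemma~\ref{lem:H}. Finding the right multiplier $\lambda$, so that the combined integrand has a definite sign on each side of $x^*$, is where I expect most of the computation. My first attempt would be the ratio of the kernels' $E$-derivatives evaluated at $x^*$.
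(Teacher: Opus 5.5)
Your plan has the right overall shape: get a single sign change for some derivative of $M$, push it through the kernel, and obtain nondegeneracy by subtracting a multiple of $\Theta'$. However, there is a genuine gap, because both concrete steps fail.

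\textbf{The claimed shape of $M$ is reversed.} At a zero $x_0$ of $(\log M)'$ one has $Y'/(2Y)=1/(e^{2x_0}-1)$. Substituting this into $(\log M)''$ gives
\[
(\log M)''(x_0)=\frac{1}{e^{2x_0}-1}\Bigl(-\frac{Y''}{Y'}-2\Bigr)(x_0).
\]
Lemma~\ref{lem:log-convexity} does not determine this sign; it only says that $-Y''/Y'$ is positive and decreasing. In the case that matters, where $p^2-pq+q^2+3p-3q>0$ (equivalently $-Y''/Y'(0+)=-C(0)/(2(q-p))>2$), your claim is actually false. There $\Theta'(0+)$ is a positive multiple of $M'(0+)$ and $\Theta$ initially increases, so $M'(0+)>0$. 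Wherever $\Theta$ is genuinely nonmonotone, $M$ cannot be increasing throughout, since then $\Theta$ would be increasing. So $M$ has an interior maximum. Because $-Y''/Y'$ is decreasing, your own computation yields the opposite shape: increasing, then decreasing.

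\textbf{A single sign change of $M'$ does not transfer to $\Theta'$.} This is the more serious problem. In the variables $z=e^{2x}-1$, $h=e^{2E}-1$, $\Theta'(E)$ is a positive multiple of the Abel transform
\[
\int_0^h \frac{\sqrt z}{1+z}\,M'\bigl(\tfrac12\log(1+z)\bigr)(h-z)^{-1/2}\dd z .
\]
Neither this kernel nor $K(E-x)\mathbf 1_{x<E}$ is TP$_2$. Indeed $(\log K)''(t)=2e^{2t}/(e^{2t}-1)^2>0$, so $K$ is log-convex, not log-concave. The Abel kernel really is not variation diminishing. Take $f$ to be a narrow unit-mass bump near $0$ minus $\tfrac12\mathbf 1_{[1,2]}$. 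Then $\int_0^h f(z)(h-z)^{-1/2}\dd z$ is about $1$ at $h=1$, about $2^{-1/2}-1<0$ at $h=2$, and about $1/(2\sqrt h)>0$ for large $h$. That is two sign changes from one.

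Your multiplier idea for nondegeneracy fails for the same reason. One cannot differentiate under this singular integral, and $\partial_h\log(h-z)^{-1/2}=-\frac{1}{2(h-z)}$ is decreasing in $z$, which is the wrong monotonicity.

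\textbf{The missing idea.} The paper performs a further integration by parts against $\dd\sqrt{(h-z)/z}$. This trades the kernel $(h-z)^{-1/2}$ for $\sqrt{h-z}$, whose $h$-log-derivative $\frac{1}{2(h-z)}$ is increasing in $z$. The price is that $M'$ is replaced by
\[
D=M''+\Bigl(2+\frac{4}{e^{2x}-1}\Bigr)M'.
\]
One then shows that $D$ changes sign at most once, from $+$ to $-$, using two facts. First, the identity
\[
\frac{D}{M}=3\Bigl(\frac{Y'}{2Y}-\frac{1}{e^{2x}-1}\Bigr)^2+\frac{Y'}{2Y}\Bigl(-\frac{Y''}{Y'}-2\Bigr)
\]
forces $0<-Y''/Y'\le 2$ at any zero of $D$. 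Second, log-convexity then gives $(D/M)'<0$ there. Lemma~\ref{lem:sign}, with multiplier $1/(h_\ast-z_0)$, then yields uniqueness of the critical point and $\Theta''<0$ at it simultaneously.
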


\begin{figure}[htbp]
    \centering
    \begin{subfigure}[b]{0.48\textwidth}
        \centering
        \includegraphics[width=\linewidth]{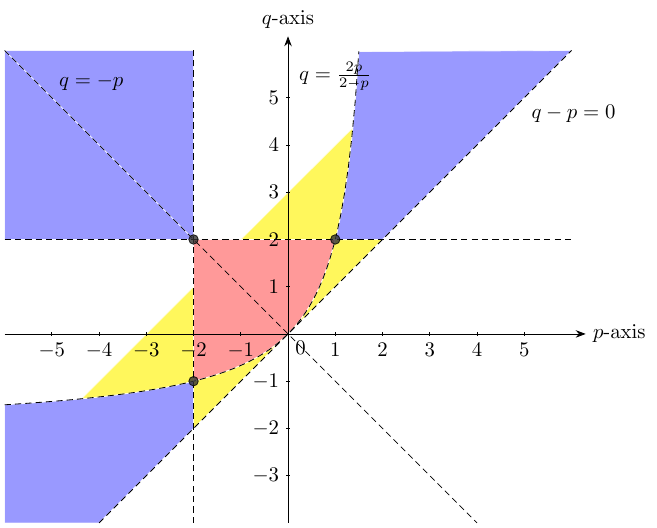}
        \caption{The non-monotonic regions (yellow) where Li--Wan determined the exact number.}
        \label{fig:LW-nonmonotone}
    \end{subfigure}
    \hfill
    \begin{subfigure}[b]{0.48\textwidth}
        \centering
        \includegraphics[width=\linewidth]{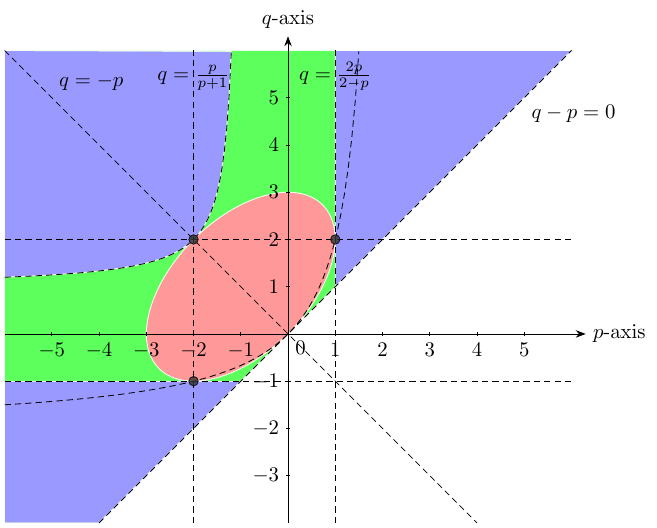}
        \caption{The regions (green) with exactly one critical point in our work.}
        \label{fig:nonmonotone}
    \end{subfigure}
    \caption{Results for nonmonotone regions}
\end{figure}

We first establish an integral lemma.

\begin{lemma} \label{lem:sign}
    Let $f$ be continuous on $(0, \infty)$ and integrable near $0$. Assume that for some $z_0 > 0$,
    \[
    f(z) > 0 \quad (0 < z < z_0), \quad f(z) < 0 \quad (z > z_0).
    \]
    Define
    \[
    A(h) := \int_{0}^{h} f(z) \sqrt{h - z} \dd z.
    \]
    Then every zero $h_\ast > 0$ of $A$ satisfies
    \[
    h_\ast > z_0, \quad A'(h_\ast) < 0.
    \]
\end{lemma}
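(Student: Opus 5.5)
First I show $h_\ast > z_0$. For $0<h\le z_0$ the integrand $f(z)\sqrt{h-z}$ is strictly positive on $(0,h)$, so $A(h)>0$. Hence every zero $h_\ast$ lies strictly beyond $z_0$. Continuity of $f$ together with integrability near $0$ makes $A$ well defined and continuous.

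Next I compute the derivative. Since $\sqrt{h-z}$ vanishes at $z=h$, differentiating under the integral gives
\[
A'(h) = \frac12\int_0^h \frac{f(z)}{\sqrt{h-z}}\dd z .
\]
This formula is valid for $h>0$. To justify it, I write $A(h)=\int_0^h f(z)\int_z^h \frac{\dd s}{2\sqrt{s-z}}\dd z$ and use Fubini, which shows $A(h)=\int_0^h A_1(s)\dd s$ with $A_1(s)=\frac12\int_0^s f(z)(s-z)^{-1/2}\dd z$. The function $A_1$ is continuous for $s>0$, because $f$ is integrable near $0$ and continuous elsewhere. So $A' = A_1$.

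The key step is to compare the kernel $(h-z)^{-1/2}$ with $(h-z)^{1/2}$ through the weight $(h-z)^{-1}$, which is increasing in $z$. Fix $h=h_\ast>z_0$ and write
\[
\frac{f(z)}{\sqrt{h-z}} = f(z)\sqrt{h-z}\cdot\frac{1}{h-z}.
\]
On $(0,z_0)$ we have $f>0$ and $\frac{1}{h-z}<\frac{1}{h-z_0}$. On $(z_0,h)$ we have $f<0$ and $\frac{1}{h-z}>\frac{1}{h-z_0}$. In both ranges this gives
\[
\frac{f(z)}{\sqrt{h-z}} < \frac{1}{h-z_0}\, f(z)\sqrt{h-z},
\]
with strict inequality on a set of positive measure. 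Integrating over $(0,h_\ast)$ yields
\[
2A'(h_\ast) < \frac{A(h_\ast)}{h_\ast-z_0} = 0.
\]
Near $z=h_\ast$ the left side may be $-\infty$ pointwise, but it is integrable since $f$ is bounded near $h_\ast$, so the inequality of integrals is legitimate. This sign-change comparison is the heart of the argument.

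The only genuine obstacle is justifying the differentiation, namely the integrability near $z=0$ combined with the singular kernel near $z=h$. The Fubini representation above handles both cleanly.
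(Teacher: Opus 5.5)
Your proof is correct and is the paper's argument. Positivity of $A$ on $(0,z_0]$ forces $h_\ast>z_0$, and comparing $(h_\ast-z)^{-1}$ with $(h_\ast-z_0)^{-1}$ on each side of $z_0$ gives $2A'(h_\ast)<A(h_\ast)/(h_\ast-z_0)=0$, which is exactly the paper's subtraction step. Your Fubini justification of $A'(h)=\frac12\int_0^h f(z)(h-z)^{-1/2}\dd z$ adds rigor that the paper leaves implicit.
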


\begin{proof}
    If $h \le z_0$, then $f(z) > 0$ on $(0, h)$. Hence $A(h) > 0$. This implies zero $h_\ast$ must satisfy
    \[
    h_\ast > z_0.
    \]
    Differentiating $A(h)$, we have
    \[
    A'(h) = \frac{1}{2} \int_{0}^{h} \frac{f(z)}{\sqrt{h - z}} \dd z.
    \]
    Then
    \begin{align*}
        A'(h_\ast) & = \frac{1}{2} \int_{0}^{h_\ast} f(z) \left[ \frac{1}{\sqrt{h_\ast - z}} - \frac{\sqrt{h_\ast - z}}{h_\ast - z_0} \right] \dd z \\
        & = \frac{1}{2} \int_{0}^{h_\ast} f(z) \sqrt{h_\ast - z} \left[ \frac{1}{h_\ast - z} - \frac{1}{h_\ast - z_0} \right] \dd z. 
    \end{align*}
    If $0 < z < z_0$, then $f(z) > 0$ but
    \[
    \frac{1}{h_\ast - z} - \frac{1}{h_\ast - z_0} < 0.
    \]
    If $z_0 < z < h_\ast$, then $f(z) < 0$ but
     \[
    \frac{1}{h_\ast - z} - \frac{1}{h_\ast - z_0} > 0.
    \]
    Therefore, the whole integral $A'(h_\ast) < 0$.
\end{proof}

In fact, the lemma also implies that $A$ has at most one zero which crossed from positive to negative. The lemma means that if $f$ changes sign exactly once, from positive to negative, then $f \ast \sqrt{\cdot}$ has at most one zero, again crossed from positive to negative. Now we prove the theorem.

\begin{proof}[proof of theorem~\ref{thm:nonmonotone}]
    By the duality relations, it suffices to consider $p \le 0 < q$. Note that the case $q + p = 0$ is already known. We may assume $q + p \neq 0$. Recall that
    \[
    M(x) = \sqrt{\frac{1 - e^{-2x}}{Y(x)}}.
    \]
    Define
    \[
    D(x) := M''(x) + \left( 2 + \frac{4}{e^{2x} - 1} \right) M'(x).
    \]
    By \eqref{eq:logM}, a direct computation yields
    \[
    \frac{D}{M} = 3\left( \frac{Y'}{2Y} - \frac{1}{e^{2x} - 1} \right)^2 + \frac{Y'}{2Y} \left( -\frac{Y''}{Y'} - 2 \right).
    \]
    Assume $D(x_0) = 0$. Since $M > 0$, by lemma~\ref{lem:log-convexity}, we have
    \begin{equation} \label{eq:0-2}
        0 < -\frac{Y''(x_0)}{Y'(x_0)} \le 2.
    \end{equation}
    Further calculation gives
    \[
    \left( \frac{D}{M} \right)'(x_0) = \frac{Y'}{2Y} \left( -\frac{Y''}{Y'} \right)' - \left( 2 + \frac{Y''}{Y'} \right) \left[ \frac{(Y')^2}{4Y^2} + \frac{Y'}{3Y} \left( 1 - \frac{Y''}{Y'}  \right) + \frac{5}{(e^{2x} - 1)^2} \right].
    \]
    Using \eqref{eq:0-2} and lemma~\ref{lem:log-convexity}, we know the first term is strictly negative and the second term is nonpositive. Hence $(D/M)'(x_0) < 0$ and then
    \[
    D'(x_0) < 0.
    \]
    It follows that $D$ has at most one zero. If it exists, it must cross from positive to negative. Now let $h = e^{2E} - 1$. From \eqref{eq:new Theta(E)}, one has
    \begin{align*}
        \Theta'(E) & = \frac{1 + h}{2(q - p)} \int_{0}^{1} \frac{\sqrt{z}}{(1 + hz) \sqrt{1 - z}} M' \left(\frac{1}{2} \log(1 + hz) \right) \dd z \\
        & = \frac{1 + h}{2(q - p)h} \int_{0}^{h} \frac{\sqrt{z}}{(1 + z) \sqrt{h - z}} M' \left(\frac{1}{2} \log (1 + z) \right) \dd z \\
        & = -\frac{2(1 + h)}{(q - p)h^2} \int_{0}^{h} \frac{z^2}{2(1 + z)} M' \left(\frac{1}{2} \log (1 + z) \right) \dd \left( \sqrt{\frac{h - z}{z}} \right) \\
        & = \frac{1 + h}{2(q - p)h^2} \int_{0}^{h} \frac{z^{3/2}}{(1 + z)^2} D \left( \frac{1}{2} \log(1 + z) \right) \sqrt{h - z} \dd z,
    \end{align*}
    where the last equality follows from integration by parts. The boundary terms vanish and the integration by parts is justified by the expansion of $M$ at $0$. The factor on the right-hand side, apart from $\sqrt{h - z}$, exhibits the same sign changes as $D$.
    
    Therefore, if $D$ does not change sign, then neither does $\Theta'$, which implies that $\Theta$ has no critical points. If $D$ changes sign once from positive to negative, it follows from lemma~\ref{lem:sign} that $\Theta$ has at most one critical point, which must be a nondegenerate local maximum.
\end{proof}

\section{Proof of theorem~\ref{thm:main}} \label{sec:4}

The preceding discussion completely clarifies the profile of $\Theta(p, q, r)$ as a function of $r$ across the entire $(p, q)$-plane: away from $(1, 2), (-2, -1), (-2, 2)$, it is either strictly increasing, strictly decreasing or strictly increasing and then strictly decreasing. Consequently, only an upper bound estimate is needed to fully determine the range of $\Theta$, thereby yielding a complete characterization of the exact number of nonconstant solutions. We now establish this upper bound estimate.

\begin{lemma} \label{lem:bound}
    Suppose
    \[
    q - p > 1, \quad q \ge 2p, \quad 2q \ge p.
    \]
    Then
    \[
    \Theta(p, q, r) < \pi, \quad r > 1.
    \]
\end{lemma}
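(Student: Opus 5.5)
Since $\Theta(p,q,r)$ is strictly decreasing in $q$ and strictly increasing in $p$ by Lemma~\ref{lem:pqmonotone}, the region $\{q-p>1,\ q\ge 2p,\ 2q\ge p\}$ reduces to its boundary. For any admissible $(p,q)$, I will move $q$ down or $p$ up while staying in the closure of the region, so that $\Theta$ does not decrease, until reaching a boundary point: either a point of the line $q-p=1$, or a point of $q=2p$ or $2q=p$.

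The extremal corners are where these lines meet. The line $q=2p$ meets $q-p=1$ at $(1,2)$, and $2q=p$ meets $q-p=1$ at $(-2,-1)$. Thus the region is a wedge with edge $q-p>1$ between the rays from $(1,2)$ and $(-2,-1)$. Through the duality \eqref{eq:dual3}, $(p,q)\mapsto(-q,-p)$, the condition $q\ge 2p$ corresponds to $2q\ge p$, so it suffices to treat the half of the region with $q+p\ge 0$.

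The plan is to compare with the degenerate points $(1,2)$ and $(-2,-1)$, where $N=\infty$ and $\Theta$ is constant in $r$. Indeed, at $(1,2)$, \eqref{eq:limit1} gives $\Theta\equiv\pi/\sqrt{1}=\pi$. From $(p,q)$ in the region with $p\le q/2$, increase $p$ to $q/2$, which increases $\Theta$. Then along the line $q=2p$ with $p>1$, decrease $q$ and $p$ in a way that increases $\Theta$: lower $q$ first, then raise $p$, in a staircase staying in the region, converging to $(1,2)$. Strictness comes from the strict monotonicity, so $\Theta(p,q,r)<\Theta(1,2,r)=\pi$ for every $r>1$.

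This has a flaw: lowering $q$ along the line $q=2p$ requires $p$ to decrease as well, which decreases $\Theta$. So I instead use the monotonicity directly. For $(p,q)$ with $q\ge 2p$, $q-p>1$ and $p\le 1$, I compare with $(p',q')=(1,2)$ when $p\le 1$ and $q\ge 2$: $\Theta(p,q,r)\le\Theta(1,q,r)\le\Theta(1,2,r)=\pi$, with at least one inequality strict. The remaining parts of the region, where $p>1$ or $q<2$, are what the main argument must handle.

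For $p>1$, use the duality \eqref{eq:dual1} and \eqref{eq:dual2}. With $q>p>0$, \eqref{eq:dual1} sends $(p,q)$ to $(-pq/(q-p),q)$ and multiplies $\Theta$ by the factor $(q-p)/q\le 1$ when $q\ge 2p$. Precisely, $\Theta(p,q,\cdot)=\frac{q}{q-p}\Theta(p^*,q,\cdot)$ with $p^*=-pq/(q-p)<0<q$. The constraint $q\ge 2p$ gives $q/(q-p)\le 2$. I will then need $\Theta(p^*,q,\cdot)<\pi q^{-1}(q-p)$ on the image. This is the main obstacle, and I would handle it through the $E$-representation. The profile is known: increasing, decreasing, or unimodal by Theorem~\ref{thm:nonmonotone}. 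In the increasing case the supremum is the limit \eqref{eq:limit2}, and in the decreasing case it is \eqref{eq:limit1}; both are checked to be $<\pi$ exactly under $q-p>1$, $q\ge2p$, $2q\ge p$. In the unimodal case I would bound the maximum by comparing via Lemma~\ref{lem:pqmonotone} with a nearby monotone parameter point, moving along $p$ or $q$ into the decreasing region $p^2-pq+q^2+3p-3q\le0$, whose supremum is $\pi/\sqrt{q'-p'}$ with $q'-p'\ge1$ realized on that ellipse.
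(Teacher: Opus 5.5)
\textbf{Gap: the leftover cases are never closed.} Your comparison $\Theta(p,q,r)\le\Theta(1,q,r)\le\Theta(1,2,r)=\pi$ for $p\le1$, $q\ge2$ is correct; one inequality is strict since $(p,q)\neq(1,2)$. But after it you do not finish the remaining parts. You call $p>1$ ``the main obstacle'' and route it through \eqref{eq:dual1} and the $E$-representation without completing it. You then switch to the trichotomy of Theorem~\ref{thm:nonmonotone} and bound the unimodal case by ``a nearby monotone parameter point'' in the ellipse with $q'-p'\ge1$. You never show that such a point with $p'\ge p$, $q'\le q$ actually exists, and that existence is essentially the content of the lemma. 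A smaller inaccuracy: in the increasing case the limit \eqref{eq:limit2} equals exactly $\pi$ on $q=2p$ and on $2q=p$, not something $<\pi$. There strictness has to come from the supremum not being attained.

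\textbf{The missing step.} You only need to locate the leftover points in the known monotonicity regions. After the reduction $q+p\ge0$:
\begin{itemize}
\item Every point with $p\ge1$ lies in the strictly increasing region. So $\Theta(p,q,r)\le\Theta(p,2p,r)<\lim_{r\to\infty}\Theta(p,2p,r)=\pi$, with no duality or $E$-representation needed.
\item The set $\{q<2,\ q-p>1,\ q+p\ge0\}$ is the triangle with vertices $(1,2)$, $(-2,2)$, $(-\tfrac12,\tfrac12)$. All three vertices lie in the closed convex ellipse $p^2-pq+q^2+3p-3q\le0$, hence so does the triangle. It is therefore in the strictly decreasing region, and $\Theta<\pi/\sqrt{q-p}<\pi$ there.
\end{itemize}
So no unimodal point survives your decomposition.

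\textbf{The paper's proof.} It covers the whole region with three comparisons and never uses Theorem~\ref{thm:nonmonotone}:
\begin{itemize}
\item For $-2\le p\le1$, lower $q$ to $p+1$. On $q=p+1$ the ellipse polynomial equals $(p+2)(p-1)\le0$, and the endpoints are the exceptional points where $\Theta\equiv\pi$.
\item For $p>1$, compare with the line $q=2p$.
\item For $p<-2$, one has $q>2$, so compare with $\Theta(-2,2,r)\equiv\pi/2$.
\end{itemize}
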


\begin{proof}
    By duality \eqref{eq:dual3}, we may assume $q + p \ge 0$. Note that
    \[
    \Theta(-2, 2, r) \equiv \frac{\pi}{2}, \quad \Theta(-2, -1, r) = \Theta(1, 2, r) \equiv \pi.
    \]

    If $p < -2$, by $q + p \ge 0$, we have
    \[
    q \ge -p > 2.
    \]
    By \ref{lem:pqmonotone}, we obtain
    \[
    \Theta(p, q, r) < \Theta(-2, 2, r) = \frac{\pi}{2} < \pi.
    \]

    If $-2 \le p \le 1$, consider the open segment $q - p = 1$. It lies in the strictly decreasing region. By \eqref{eq:limit1}, we have
    \[
    \Theta(p, q, r) < \Theta(p, p + 1, r) \le \Theta(p, p + 1, 1+) = \pi.
    \]

    If $p > 1$, consider the line $q = 2p$. It lies in the strictly increasing region. By \eqref{eq:limit2}, we have
    \[
    \Theta(p, q, r) \le \Theta(p, 2p, r) < \Theta(p, 2p, \infty) = \frac{2p}{2(2p - p)} \pi = \pi.
    \]
\end{proof}

We are now finally in a position to prove our main theorem. Some of these results have already been established by Li--Wan\cite{LW26}. For the sake of completeness, we will reprove some of them.

\begin{proof}[proof of theorem~\ref{thm:main}]
    At the three exceptional points, the solutions were written out explicitly by Li-Wan, which directly implies that $N(p, q) = \infty$.

    If $(p, q) = (1, 2)$, then up to a rotation,
    \[
    u(\theta) = 1 + \lambda \cos \theta, \quad 0 \le \lambda < 1.
    \]

    If $(p, q) = (-2, -1)$, then up to a rotation,
    \[
    u(\theta) = \frac{\sqrt{1 - \mu^2 \sin^2 \theta} - \mu \cos \theta}{1 - \mu^2}, \quad 0 \le \mu < 1.
    \]

    If $(p, q) = (-2, 2)$, then up to a rotation,
    \[
    u(\theta) = \sqrt{\lambda^2 \cos^2 \theta + \lambda^{-2} \sin^2 \theta}, \quad \lambda > 0.
    \]

    For $q - p \le 0$, there are only constant solutions, hence $N(p, q) = 0$.

    For $0 < q - p < 1$, we first assume $q > 2p$, $2q > p$. This lies in the strictly decreasing region. By \eqref{eq:limit1} and \eqref{eq:limit2}, we have
    \[
    \Theta(p, q, 1+) = \frac{\pi}{\sqrt{q - p}} > \pi, \quad \frac{\pi}{2} \le \Theta(p, q, \infty) < \pi. 
    \]
    Hence $N(p, q) = 1$. If $q \le 2p$ or $2q \le p$, we still have $\Theta(p, q, 1+) > \pi$ but $\Theta(p, q, \infty) \ge \pi$. By theorem~\ref{thm:nonmonotone}, we have $\Theta(p, q, r) > \pi$ for all $r > 1$. Hence $N(p, q) = 0$.

    For $q - p = 1$, if $-2 < p < 1$, this lies in the strictly decreasing region with $\Theta(p, p + 1, 1+) = \pi$ and $\Theta(p, p + 1, \infty) \ge \pi/2$. Hence $N(p, q) = 0$. If $p > 1$ or $p < -2$, this lies in the strictly increasing region. Then $\Theta(p, p + 1, r) > \Theta(p, p + 1, 1+) = \pi$. Hence $N(p, q) = 0$.

    For $1 < q - p \le 4$, we first assume $q \ge 2p$, $2q \ge p$. we have
    \[
    \frac{\pi}{2} \le \Theta(p, q, 1+) = \frac{\pi}{\sqrt{q - p}} < \pi, \quad \Theta(p, q, \infty) \ge \frac{\pi}{2}.
    \]
    By lemma~\ref{lem:bound} and theorem~\ref{thm:nonmonotone}, we have $N(p, q) = 0$. If $q < 2p$, this lies in the strictly increasing region with
    \[
    \frac{\pi}{2} \le \Theta(p, q, 1+) = \frac{\pi}{\sqrt{q - p}} < \pi, \quad \Theta(p, q, \infty) > \pi.
    \]
    Then $N(p, q) = 1$. The case $2q < p$ is the same as $q < 2p$ by \eqref{eq:dual3}.

    For $q - p > 4$, we first assume $q < 2p$. This lies in the strictly increasing region with
    \[
    \Theta(p, q, 1+) = \frac{\pi}{\sqrt{q - p}} < \frac{\pi}{2}, \quad \Theta(p, q, \infty) > \pi.
    \]
    Hence $N(p, q) = \ceil{\sqrt{q - p}} - 1$. The case $2q < p$ is the same as $q < 2p$ by \eqref{eq:dual3}. If $p < 0 < q$ and $1/(-p) + 1/q \le 1$. This lies in the strictly increasing region with
    \[
    \Theta(p, q, 1+) = \frac{\pi}{\sqrt{q - p}} < \frac{\pi}{2}, \quad \Theta(p, q, \infty) = \frac{\pi}{2}.
    \]
    Hence $N(p, q) = \ceil{\sqrt{q - p}} - 3$. The remaining regions satisfy $q \ge 2p$, $2q \ge p$ with
    \[
    \Theta(p, q, 1+) = \frac{\pi}{\sqrt{q - p}} < \frac{\pi}{2}, \quad \Theta(p, q, \infty) \ge \frac{\pi}{2}.
    \]
    By lemma~\ref{lem:bound} and theorem~\ref{thm:nonmonotone}, we have $N(p, q) = \ceil{\sqrt{q - p}} - 2$.

    We complete the proof.
\end{proof}

\section*{Declaration on the use of AI}

The author used GPT--6 Astra, developed by OpenAI. It assisted in verifying the asymptotic expansions of the relevant functions, thereby checking the well-posedness of corresponding analytic operations. Guided by author's analytic directions, it played a substantial role in discovering useful differential identities involving $H$ and $Y$, and introducing the auxiliary function $D$ into the representation of $\Theta'(E)$. The author takes full responsibility for its mathematical content.

\bibliographystyle{plain}

\bibliography{reference}

\end{document}